\def\blfootnote{\gdef\@thefnmark{}\@footnotetext}
\newcommand{\Z}{\mathbb Z}
\newcommand{\R}{\mathbb R}
\newcommand{\C}{\mathbb C}
\newcommand{\mc}{\mathcal}
\renewcommand{\phi}{\varphi}
\newcommand{\n}{\nabla}
\newcommand{\pd}{\partial}
\renewcommand{\tilde}{\widetilde}
\renewcommand{\ge}{\geqslant}
\renewcommand{\le}{\leqslant}
\newcommand{\mf}{\mathfrak}
\newcommand{\Sym}{\mathrm{Sym}}
\renewcommand{\bar}[1]{\mkern 0.5mu\overline{\mkern-0.5mu#1\mkern-0.5mu}\mkern 0.5mu}
\theoremstyle{plain}
\newtheorem{theorem}{Theorem}[section]
\newtheorem*{theorem*}{Theorem}
\newtheorem{lemma}[theorem]{Lemma}
\newtheorem{corollary}[theorem]{Corollary}
\newtheorem{proposition}[theorem]{Proposition}
\newtheorem{conjecture}[theorem]{Conjecture}
\newtheorem{problem}[theorem]{Problem}
\theoremstyle{definition}
\newtheorem{remark}[theorem]{Remark}
\newtheorem{definition}[theorem]{Definition}
\newtheorem{example}[theorem]{Example}
\title[HCF on complex homogeneous manifolds]{Hermitian curvature flow \\on complex homogeneous manifolds}
\author{Yury Ustinovskiy}
\address{Princeton Univsersity, Fine Hall}
\email{yuryu@math.princeton.edu}
\begin{document}
\begin{abstract}
In this paper we study a version of the Hermitian curvature flow (HCF) considered by the author in~\cite{us-16}. We focus on complex homogeneous manifolds equipped with \emph{induced metrics}. We prove that this finite-dimensional space of metrics is invariant under the HCF and write down the corresponding ODE on the space of Hermitian forms on the underlying Lie algebra. Using these computations we construct HCF-Einstein metrics on $G$-homogeneous manifolds, where $G$ is a complexification of a compact simple Lie group. We conjecture that under the HCF any induced metric on such a manifold pinches towards the HCF-Einstein metric. For a nilpotent or solvable complex Lie group equipped with an induced metric we investigate the blow-up behavior of the HCF.
\end{abstract}
\maketitle
\date{}


\section{Introduction}\label{sec:intro}

The striking success of the Hamilton's Ricci flow demonstrates that parabolic metric flows are a very powerful tool that helps construct distinguished Riemannian metrics and study geometry/topology of manifolds admitting certain special metrics. The Ricci flow has particularly nice long-time existence, convergence properties on K\"ahler manifolds and we have reasonably good understanding of the singularity formation~\cite{pss-07, so-ti-17, ti-zh-06, tsu-88}. However, on a general non-K\"ahler Hermitian complex manifold $(M,g,J)$ the Ricci flow does not behave nicely, since the Ricci form $Ric(g)$ is not necessary invariant under the operator of the almost complex structure $J$. Hence the evolved metric $g(t)$ on $M$ is might not be Hermitian. To overcome this problem some natural modifications of the Ricci flow were suggested recently in the literature~\cite{gi-11, st-ti-11, to-we-15}. Any non-K\"ahler manifold admits a one-parameter family of Hermitian connections~\cite{ga-97}, among which the Chern and the Bismut connections are of a special interest. Given a general complex Hermitian manifold it is natural to consider curvatures of these Hermitian connections instead of the Ricci form of the Levi-Civita connection to define a Hermitian version of the Ricci flow. 

In 2011 Streets and Tian~\cite{st-ti-11} introduced a family of Hermitian curvature flows (HCFs) on an Hermitian manifold $(M,g,J)$:
\begin{equation}\label{eq:HCF_intro}
\pd_t g=-S+Q(T),
\end{equation}
where $S_{i\bar j}=g^{m\bar n}\Omega_{m\bar n i\bar j}$ is the second Chern-Ricci form of the Chern connection $\Omega$ and $Q$ is an arbitrary symmetric $J$-invariant quadratic term in torsion $T$ of $\n$. For any choice of $Q$ equation~\eqref{eq:HCF_intro} defines a non-linear strongly parabolic equation for the Hermitian metric $g$.

In this paper we study a version of the Hermitian curvature flow which was considered by the author in~\cite{us-16},
\begin{equation}\label{eq:HCF}
\pd_t g_{i\bar j}=-S_{i\bar j}-\frac{1}{2}g^{m\bar n}g^{p\bar s}T_{mp\bar j}T_{\bar n\bar s i}.
\end{equation}
In what follows we will refer to this flow as the HCF. It is proved in~\cite{us-16} that the flow~\eqref{eq:HCF} preserves Griffiths positivity of the initial Hermitian metric (see Section~\ref{sec:hcf} for the precise formulation). In the present paper we focus on the behavior of this flow on a (not necessary compact) complex homogeneous manifold $M=G/H$ acted on by a complex-analytic group $G$ with the isotropy subgroup $H$. There are several reasons why these manifolds are of a special interest for us.
\begin{enumerate}
\item Homogeneous manifolds form a rich family for which one can explicitly compute certain metrics and analyze behavior of a metric flow. For example, in~\cite{gl-pa-10, la-13} authors study the Ricci flow on Lie groups and homogeneous manifolds; see also~\cite{bo-16,fi-ve-15} for the study of a general HCF version, the \emph{pluriclosed flow}, on compact homogeneous surfaces and solvmanifolds. We expect that these computations will shed some light on the geometric nature of the HCF~\eqref{eq:HCF}.
\item Non-symmetric rational homogeneous manifolds $G/P$, where $G$ is a reductive algebraic group and $P$ is its parabolic subgroup are projective manifolds, however, `natural' Hermitian metrics induced by the Killing metric of the compact form of $G$ are non-K\"ahler~\cite{ya-94}. It is interesting to analyze whether our metric flow distinguishes these special metrics.
\item The HCF flow~\eqref{eq:HCF} is known to preserve Griffiths positivity of the initial metric and homogeneous manifolds equipped with an induced metric (see Definition~\ref{def:induced} below) are essentially the only known source of such examples.
\end{enumerate}

For an homogeneous manifold $M=G/H$ we denote by $\mf g$ and $\mf h$ the complex Lie algebras of $G$ and $H$. Let $\mc M(M)$ be the infinite-dimensional space of Hermitian metrics on $M$ modulo isometry. There are two distinguished subspaces of $\mc M(M)$.
\begin{enumerate}
\item $\mc M^{\rm inv}(M)$~--- the space of $G$-invariant Hermitian metrics on $M$. These metrics are in one-to-one correspondence with $\mathrm{Ad} H$-invariant Hermitian metrics on the vector space $\mf g/\mf h$, see~\cite[X.3]{ko-no-63}:
\[
\mc M^{\rm inv}(M)\longleftrightarrow
\begin{matrix}
\left\{\mbox{$\mathrm{Ad} H$-invariant  metrics on $\mf g/\mf h$}\right\}
\end{matrix}
\]
Clearly the set $\mc M^{\rm inv}(M)$ is preserved by any metric flow of the form $\pd_t g=R(g)$, where $R$ is a tensor field, such that its value at $x\in M$ is determined by the germ of $g$ at $x$, provided that the solution to the flow is unique. Hence, any such `natural' metric flow defines an ODE on the finite dimensional space $\mc M^{\rm inv}(M)$.

\medskip
\item $\mc M^{\rm ind}(M)$~--- the space of $\mf g$-induced metrics. The holomorphic tangent bundle of a complex homogeneous manifold $M$ is globally generated by $\mf g\subset \Gamma(M,TM)$, so any Hermitian metric $h$ on the vector space $\mf g$ induces a metric on its quotient $TM$. Clearly $\mathrm{Ad}\,G$-equivalent Hermitian metrics on $\mf g$ yield isometric Hermitian structures on~$M$~so we have a map
\[
\begin{matrix}
\left\{\mbox{Hermitian metrics on $\mf g$}\right\}/\mathrm{Ad}\,G
\end{matrix}
\longrightarrow
\mc M^{\rm ind}(M).
\]
Different classes of $\mathrm{Ad}\,G$-equivalent metrics on $\mf g$ might induce the same metric on $M$ depending on $H$, so this map is not necessary bijective. Note that typically the induced Hermitian metrics are not $G$-homogeneous.
\end{enumerate}

In general, there is no reason for the set $\mc M^{\rm ind}(M)$ to be invariant under a metric flow. However, it turns out that the HCF~\eqref{eq:HCF} preserves $\mc M^{\rm ind}(M)$ (see Theorem~\ref{thm:ODE_reduction}). In some sense this not a very surprising result, since this flow preserves Griffiths non-negativity and the metrics in $\mc M^{\rm ind}(M)$ are the only known Griffiths non-negative metrics on a general complex homogeneous manifold. What is somewhat less expected is that there exists an ODE on $\Sym^{1,1}(\mf g)$~--- the set of Hermitian metrics on $\mf g^*$, which induces the HCF on all $G$-homogeneous manifolds independently of the isotropy subgroup $H$.

This ODE turns out to be very similar to the ODE defined by the zero-order part of the evolution equation for the Riemannian curvature under the Ricci flow. Namely, in the case of the Ricci flow the curvature operator $R\in\Sym^{2}(\mf{so}(n))$ evolves (in the moving frame) according to the equation
\[
\pd_t R=\Delta R+R^2+R^\#
\]
and the relevant ODE is
\[
\pd_t R=R^2+R^\#,
\]
where $\#$ is a quadratic operation on $\Sym^{2}(\mf {so}(n))$ introduced by Hamilton in~\cite{ha-86}.
For the HCF on the set of induced metrics $\mc M^{\rm ind}(M)$ the ODE for $h\in\Sym^{1,1}(\mf g)$ takes form
\[
\pd_t h=h^\#,
\]
where $\#$ a generalization of Hamilton's operation for an arbitrary Lie algebra (Definition~\ref{def:sharp} below). This is a Riccati-type system of equations, as the evolution term $h^\#$ is quadratic in $h$.

In a special case when $G$ is the complexification of a compact simple Lie group we use the ODE for $h\in\Sym^{1,1}(\mf g)$ to construct scale-static solutions to the HCF on any homogeneous manifold $M=G/H$. Such a metric corresponds to the Killing metric of the compact real form of $G$ and can be thought of as the HCF-analogue of the Einstein metric. We explicitly solve the ODE in some simple examples (see Example~\ref{ex:hopf} for the diagonal Hopf surface and Example~\ref{ex:Iwasawa} for the Iwasawa threefold) and formulate conjectural pinching behavior of this differential equation (Conjecture~\ref{conj:pinching}). It seems that at this point we need a better understanding of the algebraic properties of the operation~$\#$ to resolve the conjecture and to understand the behavior of the HCF on $\mc M^{\rm ind}(G/H)$ for a general $G$ .

Finally, in Section~\ref{sec:ODE_algebraic_properties} we study blow-up behavior of ODE $\pd_t h=h^\#$ on an arbitrary complex Lie group. Namely, we analyze how the growth rate of a solution $h(t)$ depends on algebraic properties of~$\mf g$ (see Theorems~\ref{thm:ODE_nilpotent} and~\ref{thm:ODE_solvable}).


\section{Hermitian curvature flow}\label{sec:hcf}
In this section we fix some notations and review the definition of the Hermitian curvature flow.

In what follows for a complex vector space $V$ we denote by $\bar V$ the conjugate vector space. We denote by $\Sym^{1,1}(V)\subset V\otimes\bar V$ the real vector space of Hermitian forms, i.e., the set of complex sesquilinear forms $h$ on $V^*$ such that $h(\xi,\bar{\eta})=\bar{h(\eta,\bar{\xi})}$. For $h_1,h_2\in \Sym^{1,1}(V)$ we say that $h_1\ge h_2$, if for any $\xi\in V^*$ the inequality $h_1(\xi,\bar{\xi})\ge h_2(\xi,\bar{\xi})$ holds. A form $h\in \Sym^{1,1}(V)$ is said to be \emph{positive}, if $h(\xi,\bar{\xi})>0$ for any nonzero $\xi\in V^*$.

Let $(M,g,J)$ be an Hermitian manifold, where $J$ is an integrable complex structure and $g$ is an Hermitian metric. Let $TM\otimes \C=T^{1,0}M\oplus T^{0,1}M$ be the decomposition of the complexified tangent bundle into the $\pm i$ eigenspaces of $J$. Denote by $\n$ the Hermitian connection on $TM$ uniquely characterized by the properties
\begin{enumerate}
\item $\n g=0$,
\item $\n J=0$,
\item $T(X,JY)=T(JX,Y)$ for any $X,Y\in TM$, where $T(X,Y):=\n_X Y-\n_Y X-[X,Y]$ is the torsion tensor,
\item $\n^{0,1}=\bar\pd$.
\end{enumerate}
\begin{remark}
Given (i) and (ii) properties (iii) and (iv) are equivalent. It is useful to think about (iii) as the vanishing of $(1,1)$-type part of $T$, i.e., $T(\xi,\bar{\eta})=0$ for $\xi,\eta\in T^{1,0}M$.
\end{remark}

\begin{definition}
The \emph{Chern curvature} of an Hermitian manifold $(M, g, J)$ is the curvature of $\n$.
\[
\Omega(X,Y)Z:=(\n_X\n_Y-\n_Y\n_X-\n_{[X,Y]})Z,
\]
where $X,Y,Z\in TM$. We also define a tensor with 4 vector arguments by lowering one index.
\[
\Omega(X,Y,Z,W):=g\bigl((\n_X\n_Y-\n_Y\n_X-\n_{[X,Y]})Z, W\bigr).
\]
\end{definition}

Tensor $\Omega$ satisfies a number of symmetries.
\begin{proposition}\label{prop:curvature_symmetries}
For any real vectors $X,Y,Z,W\in TM$ one has
\begin{enumerate}
\item[] $\Omega(X,Y,Z,W)=-\Omega(Y,X,Z,W)$, $\Omega(X,Y,Z,W)=-\Omega(X,Y,W,Z)$;
\item[] $\Omega(JX,JY,Z,W)=\Omega(X,Y,Z,W)$, $\Omega(X,Y,JZ,JW)=\Omega(X,Y,Z,W)$.
\end{enumerate}
\end{proposition}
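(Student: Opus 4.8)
The plan is to verify each of the four symmetries by reducing them to standard properties of the Chern connection together with its defining conditions (i)--(iv). For the antisymmetry in the first pair of arguments, $\Omega(X,Y,Z,W)=-\Omega(Y,X,Z,W)$, I would note that this is immediate from the definition of $\Omega(X,Y)Z$ as a commutator of covariant derivatives corrected by $\n_{[X,Y]}$: swapping $X$ and $Y$ flips the sign of the commutator and of $[X,Y]$. For the antisymmetry in the last pair, $\Omega(X,Y,Z,W)=-\Omega(X,Y,W,Z)$, I would use metric compatibility $\n g=0$ (condition (i)): differentiating $g(Z,W)$ twice and antisymmetrizing shows that $\Omega(X,Y,Z,W)+\Omega(X,Y,W,Z)=0$, exactly as in the Riemannian case.

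\smallskip

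For the two $J$-invariance identities I would exploit $\n J=0$ (condition (ii)), which forces $\Omega(X,Y)$ to commute with $J$. This gives $\Omega(X,Y)(JZ)=J\,\Omega(X,Y)Z$, and combined with the fact that $g$ is Hermitian (so $g(JZ,JW)=g(Z,W)$) it yields the second identity $\Omega(X,Y,JZ,JW)=\Omega(X,Y,Z,W)$. The first $J$-identity, $\Omega(JX,JY,Z,W)=\Omega(X,Y,Z,W)$, is the subtler one: it is \emph{not} a formal consequence of (i) and (ii) alone but genuinely uses the Chern connection's compatibility with the complex structure through condition (iv), $\n^{0,1}=\bar\pd$. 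The cleanest route is to pass to a local holomorphic frame and a $(1,0)$-coframe, where the Chern connection form $\theta$ is of type $(1,0)$ and the curvature $\Omega=\bar\pd\theta$ is a $(1,1)$-form with values in $\mathrm{End}(T^{1,0}M)$. A $(1,1)$-form evaluated on $(JX,JY)$ picks up $(i)(-i)=1$ on its $(1,0)\wedge(0,1)$ component and hence is invariant; this is precisely the statement that $\Omega$ has no $(2,0)$ or $(0,2)$ component.

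\smallskip

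The main obstacle I anticipate is organizing the first $J$-identity cleanly. The assertion that $\Omega$ is a $(1,1)$-form is equivalent to the vanishing of its $(2,0)$ and $(0,2)$ parts, and the $(0,2)$ part vanishes immediately because $\n^{0,1}=\bar\pd$ has $\bar\pd^2=0$, while the $(2,0)$ part vanishes by conjugation (or by the reality of the real tensor $\Omega$ together with metric compatibility). I would therefore first establish in the local holomorphic frame that the curvature of the Chern connection is of pure type $(1,1)$, citing the standard structure equation $\Omega=\bar\pd\theta$ for the Chern connection, and then read off all four symmetries as consequences: the type-$(1,1)$ statement directly encodes $\Omega(JX,JY,Z,W)=\Omega(X,Y,Z,W)$, while the endomorphism being skew-Hermitian with respect to $g$ (from $\n g=0$) encodes the remaining antisymmetries and the second $J$-identity. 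Since the statement is purely pointwise and tensorial, it suffices to check it for $X,Y,Z,W$ drawn from a holomorphic frame, which makes all of these computations short and purely algebraic.
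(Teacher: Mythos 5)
Your argument is correct. The paper itself offers no proof of this proposition --- it is stated as a standard package of symmetries of the Chern curvature --- so there is no authorial argument to compare against; the route you take (first-slot antisymmetry from the definition, second-slot antisymmetry from $\n g=0$, the second $J$-identity from $\n J=0$ together with $g(J\cdot,J\cdot)=g(\cdot,\cdot)$, and the first $J$-identity from the curvature being of type $(1,1)$) is exactly the standard one the author is implicitly invoking. The only point worth tightening is that the identity $\Omega=\bar\pd\theta$ is itself the conclusion that the $(2,0)$ and $(0,2)$ parts of $d\theta+\theta\wedge\theta$ vanish, so you should present it in the order you sketch at the end: the $(0,2)$ part dies because $\n^{0,1}=\bar\pd$ and $\bar\pd^2=0$ on an integrable complex structure, and the $(2,0)$ part then dies by reality of $\Omega$ (or directly from $\theta=h^{-1}\pd h$, which uses metric compatibility); with that ordering the proof is complete and purely pointwise, as you say.
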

Symmetries of $\Omega$ imply that for $\xi,\eta,\zeta,\nu\in T^{1,0}M$
\[
\Omega(\xi,\eta,\cdot,\cdot)=\Omega(\cdot,\cdot,\zeta,\nu)=0,\quad
\Omega(\xi,\bar\eta,\zeta,\bar{\nu})=\bar{\Omega(\eta,\bar\xi,\nu,\bar{\zeta})},
\]
in particular $\Omega(\xi,\bar \xi,\eta,\bar\eta)\in\R$. It is easy to check that the values $\Omega(\xi,\bar \xi,\eta,\bar\eta)$ for $\xi,\eta \in T^{1,0}M$ completely determine tensor $\Omega$. In what follows, by abuse of notation we sometimes write $TM$ as a shorthand for the holomorphic tangent bundle $T^{1,0}M$.

The main object of our study is the following version of a general Hermitian curvature flow~\eqref{eq:HCF_intro}
\begin{equation}\label{eq:HCF_2}
\begin{cases}
\pd_t g_{i\bar j} = -\Theta_{i\bar j}(g(t)),\\
g(0)=g_0,
\end{cases}
\end{equation}
where $\Theta(g)\in \Sym^{1,1}(T^*M)$ is given by 
\[
\Theta(g)_{i\bar j}:=g^{m\bar n}\Omega_{m\bar n i\bar j}+
\frac{1}{2}g^{m\bar n}g^{p\bar s}T_{pm\bar j}T_{\bar s\bar n i}
\] 
is the sum of the second Chern-Ricci curvature and a certain quadratic torsion term for $g=g(t)$. We refer to $\Theta(g)$ as the \emph{torsion-twisted Chern-Ricci form}. By~\cite[Prop.\,5.1]{st-ti-11} on a compact manifold there exists unique solution to equation~\eqref{eq:HCF_2} on some maximal time interval $[0,\tau)$. If the initial metric $g_0$ is K\"ahler, the HCF coincides with the K\"ahler-Ricci flow.

Important property of the flow~\eqref{eq:HCF_2} is preservation of the following curvature positivity condition.
\begin{definition}\label{def:positive}
A complex Hermitian manifold $(M,g,J)$ has \emph{Griffiths positive} (resp.\,non-negative) curvature, if its Chern curvature $\Omega$ for any non-zero $\xi,\eta\in T^{1,0}M$ satisfies
\[
\Omega(\xi,\bar \xi, \eta,\bar \eta)>0, (\mbox{resp.} \ge0).
\]
\end{definition}
\begin{theorem}[{\cite[Theorem\,0.1]{us-16}}]
Let $g(t), t\in[0,\tau)$ be the solution to the HCF on a compact complex Hermitian manifold $(M,g_0,J)$. Assume that the Chern curvature $\Omega^{g_0}$ at the initial moment $t=0$ is Griffiths non-negative (resp. positive). Then for $t\in[0,\tau)$ the Chern curvature $\Omega(t)=\Omega^{g(t)}$ remains Griffiths non-negative (resp. positive).

If, moreover, the Chern curvature $\Omega^{g_0}$ is Griffiths positive at least at one point $x\in M$, then for any $t\in(0;\tau)$ the Chern curvature is Griffiths positive everywhere on $M$.
\end{theorem}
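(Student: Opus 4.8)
The plan is to prove preservation of Griffiths (non-)positivity by combining a derivation of the evolution equation for the Chern curvature with Hamilton's maximum principle for tensors. First I would compute how the full Chern curvature tensor $\Omega$ evolves under the flow~\eqref{eq:HCF_2}. Differentiating the definition of $\Omega$ in $t$, commuting the time derivative past the Chern connection $\n$, and substituting $\pd_t g=-\Theta$, one obtains an equation of reaction--diffusion type
\[
\pd_t\Omega_{i\bar j k\bar l}=\Delta\Omega_{i\bar j k\bar l}+\mc R_{i\bar jk\bar l}(\Omega,T),
\]
where $\Delta=g^{m\bar n}\n_m\n_{\bar n}$ is the Chern rough Laplacian and $\mc R$ is an expression quadratic in $\Omega$, in the torsion $T$, and in $\n T$. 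Producing this equation in closed form is the most laborious part of the computation: it requires repeatedly commuting covariant derivatives (each commutation introducing lower-order curvature and torsion terms) and simplifying via the first and second Bianchi identities for the Chern connection.

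With the evolution equation in hand, the argument runs through the Hamilton maximum principle for systems. The set of Griffiths non-negative curvature tensors at a point, $C=\{\Omega:\Omega(\xi,\bar\xi,\eta,\bar\eta)\ge0\ \forall\,\xi,\eta\}$, is an intersection of half-spaces, hence a closed convex cone; moreover it is invariant under the $U(n)$-holonomy of $\n$, since Griffiths non-negativity is expressed purely through $g$ and $J$, so it defines a parallel, fibrewise-convex subbundle. Hamilton's theorem then reduces preservation of $C$ under the PDE to preservation of $C$ under the associated ODE $\tfrac{d}{dt}\Omega=\mc R(\Omega,T)$, the diffusion term being handled by the general theorem. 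Convexity and parallel invariance of $C$ are immediate; the content lies entirely in the ODE step.

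The hard part will be verifying that $\mc R$ points into $C$ along the boundary, i.e. that at any null pair $(\xi_0,\eta_0)$ realizing $\Omega(\xi_0,\bar\xi_0,\eta_0,\bar\eta_0)=0$ one has $\mc R(\Omega,T)(\xi_0,\bar\xi_0,\eta_0,\bar\eta_0)\ge0$. Two ingredients combine here. Because $(\xi_0,\eta_0)$ minimizes the curvature function $(\xi,\eta)\mapsto\Omega(\xi,\bar\xi,\eta,\bar\eta)$ at the value $0$, the first-order optimality conditions force the vanishing of the mixed terms $\Omega(\xi_0,\bar\xi_0,\eta_0,\bar v)=\Omega(\xi_0,\bar v,\eta_0,\bar\eta_0)=0$ for all $v$, which eliminates the sign-indefinite cross terms in $\mc R$. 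What remains must be shown to be non-negative, and this is exactly where the specific torsion correction $-\tfrac12 g^{m\bar n}g^{p\bar s}T_{mp\bar j}T_{\bar n\bar s i}$ of the flow~\eqref{eq:HCF} is essential: the coefficient $\tfrac12$ is tuned so that the genuinely torsion-dependent contributions, after applying the Bianchi identities, assemble into a sum of squared norms of torsion and covariant-derivative expressions, i.e.\ a manifestly non-negative quantity that is non-negative \emph{irrespective} of the value of $T$, rather than an expression of indefinite sign. I expect this reaction-positivity computation --- isolating the correct squares and checking that no indefinite remainder survives --- to be the crux of the whole proof; for a different choice of the quadratic torsion term $Q$ the remainder would not have a definite sign.

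Finally, for the last assertion I would invoke the strong maximum principle. Once non-negativity is established, set $f(x,t)=\min_{|\xi|=|\eta|=1}\Omega(\xi,\bar\xi,\eta,\bar\eta)(x,t)\ge0$. The evolution equation together with the reaction estimate above shows that $f$ is, in the support sense, a supersolution of a scalar heat inequality $\pd_t f\ge\Delta f-Cf$. Since $M$ is compact and $f(x,0)>0$ at the given point $x$, the parabolic strong maximum principle --- in the tensorial Hamilton--Bony form needed to accommodate the minimum over directions --- forces $f(\cdot,t)>0$ on all of $M$ for every $t\in(0,\tau)$, which is precisely the claimed instantaneous spreading of Griffiths positivity.
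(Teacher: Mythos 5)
First, a point of context: the paper you were given does not prove this theorem at all — it is imported as a black box from \cite{us-16} (Theorem 0.1 there), so there is no in-paper argument to match. Measured against the actual proof in that reference, your proposal has the right overall architecture (an evolution equation for $\Omega$ of reaction--diffusion type, a tensor maximum principle at null pairs, and the observation that the coefficient $\tfrac12$ in the torsion quadratic is the decisive tuning), but the step you yourself identify as the crux is not correct as described, and this is a genuine gap rather than a technicality.

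You reduce everything to showing that the reaction term $\mc R(\Omega,T)$ is non-negative at a null pair after imposing only the first-order fiberwise conditions $\Omega(v,\bar{\xi}_0,\eta_0,\bar{\eta}_0)=\Omega(\xi_0,\bar{\xi}_0,\eta_0,\bar{v})=0$, and you invoke Hamilton's ODE--PDE theorem to dispose of the diffusion term. Neither half goes through. Hamilton's cone-preservation theorem requires the reaction to be a fiberwise function of the section $\Omega$ alone; here $\mc R$ involves $T$, $\n T$ and, before further manipulation, terms of the form $T\ast\n\Omega$ produced by the torsion corrections in the second Bianchi identity for the Chern connection. A contraction such as $T^{p}_{mi}\,\n^{m}\Omega_{p\bar{j}k\bar{l}}\,\xi_0^{i}\bar{\xi}_0^{j}\eta_0^{k}\bar{\eta}_0^{l}$ fills only three of the four curvature slots with the null vectors, is not a directional derivative of the scalar $\Omega(\xi,\bar{\xi},\eta,\bar{\eta})$, and is not controlled by the first-order minimum conditions; the same is true of the $\n T\ast\Omega$ terms, which enter linearly and hence cannot be sign-definite ``irrespective of $T$''. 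The cited proof resolves this precisely by choosing a non-parallel, torsion-twisted extension of the minimizing vectors off the minimum point (schematically $\n\xi=T\ast\xi$), so that the indefinite $T\ast\n\Omega$ and $\n T\ast\Omega$ contributions are absorbed into the Laplacian of the scalar $\Omega(\xi,\bar{\xi},\eta,\bar{\eta})$ and into the second-order (Hessian) inequality at the spacetime minimum; only after this absorption do the remaining terms complete to squares, and this is where the specific form of $Q$ is used. In other words, the positivity is a property of the combined diffusion-plus-reaction operator evaluated on a cleverly extended frame, not of the ODE reaction term in isolation. Your concluding strong-maximum-principle step is fine in outline, but it presupposes the differential inequality that the preceding step fails to deliver.
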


One of the main sources of complex manifolds admitting metrics of Griffith non-negative curvature is the set of manifolds with \emph{globally generated} tangent bundle. To be precise, if $\C^m\to TM$ is a surjective map from a trivial bundle over a complex manifold $M$, then any constant metric on $\C^m$ induces a metric with a Griffith non-negative curvature~\cite[\S 0.5]{gr-ha-94}. In the next section we investigate the behavior of HCF on such manifolds, equipped with an induced metric.


\section{HCF on a globally generated tangent bundle}\label{sec:hcf_globally_generated}

We start this section with deriving the coordinate expression for the torsion-twisted Chern-Ricci form
\[
\Theta_{i\bar j}=g^{m\bar n}\Omega_{m\bar n i\bar j}+\frac{1}{2}g^{m\bar n}g^{p\bar s}T_{mp\bar j}T_{\bar n\bar s i},
\]
on $(M, g, J)$. To simplify subsequent applications of this computation we provide a formula for the $g$-dual of $\Theta_{i\bar j}$
\[
\Theta^{i\bar j}:=g^{i\bar n}g^{m\bar j}\Theta_{m\bar n},
\]
i.e., we use the metric $g$ to identify $\Theta$ with a section of $\Sym^{1,1}(TM)$.

\begin{proposition}\label{prop:Theta_coord}
\begin{equation}\label{eq:Theta_coord}
\Theta^{i\bar j}=
g^{m\bar n}\pd_{m}\pd_{\bar n}g^{i\bar j}-\pd_mg^{i\bar n}\pd_{\bar n}g^{m\bar j}.
\end{equation}
\end{proposition}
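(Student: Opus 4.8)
The plan is to work in local holomorphic coordinates $z^1,\dots,z^n$ and to insert the standard coordinate expressions for the Chern connection of a Hermitian metric. Its only nonvanishing Christoffel symbols are $\Gamma^k_{ij}=g^{k\bar l}\pd_i g_{j\bar l}$, so that the torsion is $T_{ij\bar k}=\pd_i g_{j\bar k}-\pd_j g_{i\bar k}$ and the Chern curvature is $\Omega_{m\bar n i\bar j}=-\pd_m\pd_{\bar n}g_{i\bar j}+g^{p\bar q}\pd_m g_{i\bar q}\pd_{\bar n}g_{p\bar j}$. These are the only geometric inputs; the rest is algebra, organized around the identity $\pd_m g^{i\bar j}=-g^{i\bar q}g^{p\bar j}\pd_m g_{p\bar q}$ for the derivative of the inverse metric.

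Before computing I would reduce to a convenient frame. It suffices to verify \eqref{eq:Theta_coord} at a single arbitrary point $x_0$ in holomorphic coordinates for which $g_{i\bar j}(x_0)=\delta_{ij}$, leaving the first and second derivatives of $g$ at $x_0$ arbitrary. The justification is that both sides transform as components of a section of $\Sym^{1,1}(TM)$ under \emph{complex-linear} changes of coordinates: for such changes the transition functions have vanishing second derivatives, so the would-be non-tensorial corrections to $g^{m\bar n}\pd_m\pd_{\bar n}g^{i\bar j}$ and to $\pd_m g^{i\bar n}\pd_{\bar n}g^{m\bar j}$ are absent, while $\Theta^{i\bar j}$ is a genuine tensor in any case. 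Since every positive Hermitian form is carried to $\delta_{ij}$ by a linear change, this normalization loses no generality.

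At such a point the computation is short, and in fact the two sides agree term by term. Since $\Theta^{i\bar j}|_{x_0}=\Theta_{j\bar i}|_{x_0}$, I compute the lowered quantity: tracing the curvature gives $S_{j\bar i}=-\sum_a\pd_a\pd_{\bar a}g_{j\bar i}+\sum_{a,c}\pd_a g_{j\bar c}\,\pd_{\bar a}g_{c\bar i}$, and collapsing the product $T_{ap\bar i}T_{\bar a\bar p j}$ by the symmetry $a\leftrightarrow p$ of the double sum turns the torsion term into $\sum_{a,p}\pd_a g_{p\bar i}\,\pd_{\bar a}g_{j\bar p}-\sum_{a,p}\pd_a g_{p\bar i}\,\pd_{\bar p}g_{j\bar a}$. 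For the right-hand side I expand at $g=\delta$ using $\pd_m g^{i\bar j}=-\pd_m g_{j\bar i}$ together with its derivative $\pd_m\pd_{\bar n}g^{i\bar j}=\pd_{\bar n}g_{b\bar i}\,\pd_m g_{j\bar b}+\pd_{\bar n}g_{j\bar a}\,\pd_m g_{a\bar i}-\pd_m\pd_{\bar n}g_{j\bar i}$; tracing this and adjoining $-\pd_m g^{i\bar n}\pd_{\bar n}g^{m\bar j}$ yields exactly one second-derivative term and three quadratic first-derivative terms. Comparing, the Laplacian terms coincide and each of the three quadratic terms matches one of the curvature/torsion quadratic terms verbatim, which proves the identity.

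I expect the only real obstacle to be bookkeeping: keeping the holomorphic and antiholomorphic index placement correct through the raising of indices, and computing $\pd_m\pd_{\bar n}g^{i\bar j}$ at $g=\delta$ without error. Conceptually there is little to do --- the second-derivative part $-g^{m\bar n}\pd_m\pd_{\bar n}g_{i\bar j}$ of the second Chern-Ricci curvature, after raising indices and rewriting through $\pd g^{-1}=-g^{-1}(\pd g)g^{-1}$, becomes $+g^{m\bar n}\pd_m\pd_{\bar n}g^{i\bar j}$ up to first-derivative corrections, and those corrections are accounted for precisely by the quadratic curvature and torsion terms, leaving the single surviving term $-\pd_m g^{i\bar n}\pd_{\bar n}g^{m\bar j}$. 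A fully coordinate-free variant would retain the factors $g^{a\bar b}$ throughout and substitute $\pd g=-g(\pd g^{-1})g$ everywhere; this lengthens but does not alter the argument.
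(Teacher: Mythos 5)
Your proof is correct and follows essentially the same route as the paper: a direct computation from the standard coordinate formulas for the Chern connection, curvature and torsion, organized around the identity $\pd g^{-1}=-g^{-1}(\pd g)\,g^{-1}$. The only difference is presentational --- you normalize $g_{i\bar j}(x_0)=\delta_{ij}$ by a linear change of coordinates and verify the identity pointwise (a legitimate reduction, since both sides are tensorial under constant linear changes), whereas the paper keeps the factors of $g$ and $g^{-1}$ throughout and works in general coordinates; the term-by-term match you describe is the same one the paper obtains.
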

\begin{proof}
Recall that for the Chern connection on $(TM, g)$ we have:
\[
\Gamma_{ij}^k=g^{k\bar l}\pd_i g_{j\bar l}.
\]
Hence for the Chern curvature we have:
\begin{equation}
\begin{split}
g^{m\bar l}\Omega_{i\bar j m}^{k}&=
-g^{m\bar l}\pd_{\bar j}\bigl(g^{k\bar n}\pd_ig_{m\bar n} \bigr)=
-g^{m\bar l}\pd_{\bar j}\Bigl(
	-g^{k\bar n}g_{m\bar s}g_{p\bar n}\pd_i g^{p\bar s}
\Bigr)=
g^{m\bar l}\pd_{\bar j}\Bigl(
	g_{m\bar s}\pd_i g^{k\bar s}
\Bigr)=\\&=
g^{m\bar l}\pd_{\bar j}g_{m\bar s}\pd_ig^{k\bar s}+\pd_i\pd_{\bar j}g^{k\bar l}=
-g_{p\bar s} \pd_{\bar j}g^{p\bar l}\pd_i g^{k\bar s}+\pd_i\pd_{\bar j}g^{k\bar l}.
\end{split}
\end{equation}
For the torsion tensor $T_{mp}^i$ we compute:
\begin{equation}
\begin{split}
T_{mp}^i=
g^{i\bar l}(\pd_m g_{p\bar l}-\pd_p g_{m\bar l})=
g_{m\bar l}\pd_p g^{i\bar l} - g_{p\bar l}\pd_m g^{i\bar l}.
\end{split}
\end{equation}
Therefore
\begin{equation}
\begin{split}
\frac{1}{2}g^{m\bar n}g^{p\bar s}T_{mp}^iT_{\bar n\bar s}^{\bar j}&=
\frac{1}{2}g^{m\bar n}g^{p\bar s}(
		g_{m\bar l}\pd_p g^{i\bar l} - g_{p\bar l}\pd_m g^{i\bar l}
	)(
		g_{k\bar n}\pd_{\bar s} g^{k\bar j} - g_{k\bar s}\pd_{\bar n} g^{k\bar j}
	)=\\&=
g^{p\bar s}g_{k\bar l}\pd_p g^{i\bar l}\pd_{\bar s}g^{k\bar j}-
\pd_pg^{i\bar n}\pd_{\bar n}g^{p\bar j}.
\end{split}
\end{equation}
Using the above formulas together we get the expression for $\Theta^{i\bar j}$.
\begin{equation}
\begin{split}
\Theta^{i\bar j}=
g^{m\bar n}\pd_{m}\pd_{\bar n}g^{i\bar j}-\pd_pg^{i\bar s}\pd_{\bar s}g^{p\bar j}.
\end{split}
\end{equation}
After relabeling indices we get the stated formula.
\end{proof}

Combining Proposition~\ref{prop:Theta_coord} and the equation of the HCF flow~\eqref{eq:HCF_2} we get the following corollary.

\begin{corollary}\label{cor:HCF_inverse}
Let $(M,J,g_0)$ be an Hermitian manifold. Assume that $\tilde{g}(t)\in \Sym^{1,1}(TM)$ is a solution to the PDE on $M\times [0,\tau)$
\begin{equation}\label{eq:HCF_for_g_inverse}
\begin{cases}
\pd_t\tilde{g}^{i\bar j}=\tilde{g}^{m\bar n}\pd_m\pd_{\bar n} \tilde{g}^{i\bar j}-\pd_m \tilde{g}^{i\bar n}\pd_{\bar n}\tilde{g}^{m\bar j},\\
\tilde{g}(0)=g_0^{-1},
\end{cases}
\end{equation}
such that $\tilde{g}^{i\bar j}(t)$ is positive definite for $t\in [0,\tau)$. Then $g(t):=\tilde{g}^{-1}(t)$ is the solution to the HCF on~$M$.
\end{corollary}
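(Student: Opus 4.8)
The plan is to reduce the statement to the algebraic relation between the time derivative of a positive Hermitian matrix and that of its inverse, and then to invoke Proposition~\ref{prop:Theta_coord}. Concretely, I would set $g(t):=\tilde g(t)^{-1}$, which is well-defined and positive definite for $t\in[0,\tau)$ precisely because $\tilde g(t)$ is assumed positive definite there; moreover $g^{i\bar j}(t)=\tilde g^{i\bar j}(t)$ and $g(0)=(g_0^{-1})^{-1}=g_0$, so the initial condition of~\eqref{eq:HCF_2} is automatic. It then remains only to verify the evolution equation $\pd_t g_{i\bar j}=-\Theta_{i\bar j}(g)$.

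First I would differentiate the defining identity $g_{m\bar j}\,g^{m\bar n}=\delta^n_j$ in $t$. Since the Kronecker symbol is constant this gives $(\pd_t g_{m\bar j})g^{m\bar n}+g_{m\bar j}(\pd_t g^{m\bar n})=0$, and contracting with $g_{i\bar n}$ (using $g^{m\bar n}g_{i\bar n}=\delta^m_i$) isolates the time derivative of the metric as the standard variation formula
\[
\pd_t g_{i\bar j}=-g_{m\bar j}\,g_{i\bar n}\,\pd_t g^{m\bar n}.
\]

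Next I would substitute the right-hand side of~\eqref{eq:HCF_for_g_inverse} for $\pd_t\tilde g^{m\bar n}=\pd_t g^{m\bar n}$. By Proposition~\ref{prop:Theta_coord} this right-hand side is exactly the $g$-dual form $\Theta^{m\bar n}=g^{m\bar q}g^{p\bar n}\Theta_{p\bar q}$. Plugging this in and contracting then yields
\[
\pd_t g_{i\bar j}=-g_{m\bar j}\,g_{i\bar n}\,\Theta^{m\bar n}
=-\bigl(g_{m\bar j}g^{m\bar q}\bigr)\bigl(g_{i\bar n}g^{p\bar n}\bigr)\Theta_{p\bar q}
=-\Theta_{i\bar j},
\]
where the two bracketed contractions collapse to Kronecker deltas by the very definition of the inverse metric. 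This is precisely equation~\eqref{eq:HCF_2}, so $g(t)$ solves the HCF.

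The computation is entirely routine; the only points that require care are the index bookkeeping—keeping track of which contractions produce deltas in barred versus unbarred indices—and the observation that positivity of $\tilde g(t)$ is exactly what guarantees that $g(t)=\tilde g(t)^{-1}$ remains a genuine Hermitian metric throughout $[0,\tau)$. I would also note that the implication proved here is purely pointwise, so no appeal to uniqueness is needed for the corollary itself; its value is to recast the HCF as the cleaner equation~\eqref{eq:HCF_for_g_inverse} for $g^{-1}$, whose leading term $\tilde g^{m\bar n}\pd_m\pd_{\bar n}$ is a plain metric Laplacian acting componentwise, a form much better suited to the homogeneous computations that follow. On a compact manifold the solution obtained this way coincides with the unique HCF solution of~\cite[Prop.\,5.1]{st-ti-11}.
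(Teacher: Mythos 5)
Your proposal is correct and follows exactly the route the paper intends: the corollary is stated there as an immediate consequence of Proposition~\ref{prop:Theta_coord} combined with the HCF equation~\eqref{eq:HCF_2}, and your explicit use of the variation formula $\pd_t g_{i\bar j}=-g_{m\bar j}g_{i\bar n}\,\pd_t g^{m\bar n}$ together with the identification of the right-hand side of~\eqref{eq:HCF_for_g_inverse} with $\Theta^{m\bar n}$ is precisely the omitted verification. The index bookkeeping and the remark on positivity are both accurate, so nothing is missing.
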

An interesting feature of equation~\eqref{eq:HCF_for_g_inverse} is that its right-hand side depends only on $\tilde{g}$ and not on $\tilde{g}^{-1}$. In particular, there might exist a solution $\tilde{g}(t)$ starting with a degenerate or indefinite form. It would be interesting to find a geometric interpretation of such solution. However, in this case the equation is not elliptic anymore.
\medskip

Now, assume that the holomorphic tangent bundle  $TM$ is globally generated. If $M$ is compact, the space of global sections $\Gamma(M,TM)$ is finite dimensional. If $M$ is not necessarily compact, $\Gamma(M,TM)$ might be infinite dimensional. Nevertheless, global generation implies that there exists a \emph{finite} collection $\{s_\alpha\}_{\alpha=1}^m$ of elements in $\Gamma(M,TM)$ such that at any point $x\in M$ holomorphic vector fields $\{s_\alpha\}$ generate $TM$, i.e., the natural evaluation map
\[
\mathrm{ev}_x\colon \Gamma(M,TM)\to T_xM
\]
restricted to $V:=\mathrm{span}(s_1,\dots,s_m)$ is surjective (see~\cite[Prop.\,11.2]{de}). In this situation we say that a subspace $V\subset \Gamma(M,TM)$ \emph{generates} $TM$.
\begin{definition}\label{def:induced}
An Hermitian metric $g$ on $M$ is said to be \emph{induced} from an Hermitian metric $h$ on a vector space $V\subset \Gamma(M,TM)$ generating $TM$, if it has the form $g=j_*h\,\vline\,_{(\ker \mathrm{ev})^\perp}$, where
\[
\mathrm{ev}_x\colon V\to T_xM
\]
is the evaluation map and
\[
j_x\colon (\ker \mathrm{ev}_x)^\perp\to T_xM
\]
is the natural isomorphism. In this case we write $g=\mathrm{ev}_*h$ for $h\in\Sym^{1,1}(V^*)$. The set induced metrics on $M$ is denoted by $\mc M^{\rm ind}(M)$. Note that $\mc M^{\rm ind}(M)$ depends on the choice of generating space $V\subset \Gamma(M,TM)$.
\end{definition}

Let $s_{\alpha}=a^i_{\alpha}\,\pd/\pd z_i$ be the local coordinate expression for the vector field $s_\alpha$ and denote $a_{\bar{\alpha}}^{\bar i}:=\bar{a^i_{\alpha}}$. Functions $a^i_\alpha$ are holomorphic. In the coordinates the induced metric $g=g_{i\bar j}$ is given by the expression
\begin{equation}\label{eq:g_a_coord}
g_{i\bar j}=\Bigl(a_\alpha^ia^{\bar j}_{\bar{\beta}}h^{\alpha\bar\beta}\Bigr)^{-1}.
\end{equation}
Indeed $a_\alpha^ia^{\bar j}_{\bar{\beta}}h^{\alpha\bar\beta}$ is the Hermitian metric on $T^*M$ induced by the inclusion $\mathrm{ev}_x^*\colon T^*_xM\to V^*$ and the Hermitian metric $g_{i\bar j}$ on $TM$ is its inverse.

\begin{remark}
Let $\mc E\to M$ be a holomorphic globally generated vector bundle. Notions of global generation and Griffiths non-negative curvature extend to this case in a straightforward manner. It is known that metric $h$ on $\mc E$ induced from a metric on the generating vector space $V\subset \Gamma(M,\mc E)$ has Griffiths non-negative Hermitian curvature $R_{\mc E,h}$:
\[
R_{\mc E, h}(v,\bar v, e,\bar e)\ge 0
\]
for any $x\in M$, $v\in T_x^{1,0}M$, $e\in\mc E_x$. In particular in our setting the Chern curvature $\Omega$ is Griffiths non-negative.
\end{remark}

\begin{proposition}\label{prop:hcf_lie_bracket}
Let $(M, g, J)$ be a complex manifold, such that $TM$ is globally generated by $V\subset \Gamma(M,TM)$. Assume that $g={\rm ev}_*h$ for $h\in \mathrm{Sym}^{1,1}(V^*)$. Let $\{s_\alpha\}_{\alpha=1}^m$ be an $h$-orthonormal frame of $V$. Then the torsion-twisted Chern-Ricci form $\Theta(g)$ considered as a section of $\Sym^{1,1}(TM)$ is given by
\begin{equation}
\Theta(g)=\frac{1}{2}\sum_{\alpha,\beta=1}^{m}[s_{\alpha},s_{\beta}]\otimes\bar{[s_{\alpha},s_{\beta}]},
\end{equation}
where $[\cdot,\cdot]$ is the commutator of vector fields on $M$.
\end{proposition}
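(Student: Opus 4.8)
The plan is to evaluate both sides in local holomorphic coordinates at an arbitrary point and to feed the explicit form of the induced metric into the coordinate identity of Proposition~\ref{prop:Theta_coord}. Since $\{s_\alpha\}$ is $h$-orthonormal, the dual form $h^{\alpha\bar\beta}$ is the constant identity matrix, so~\eqref{eq:g_a_coord} expresses the inverse metric on $TM$ simply as
\[
g^{i\bar j}=\sum_{\alpha=1}^m a^i_\alpha a^{\bar j}_{\bar\alpha},
\]
where $s_\alpha=a^i_\alpha\,\pd/\pd z_i$. The two facts I would exploit throughout are that $h^{\alpha\bar\beta}$ carries no coordinate dependence (so no derivative of $h$ ever enters) and that the functions $a^i_\alpha$ are holomorphic, so $\pd_{\bar n}a^i_\alpha=0$ and, conjugately, $\pd_m a^{\bar j}_{\bar\alpha}=0$.

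With this in hand I would expand the right-hand side of~\eqref{eq:Theta_coord}. Writing $c^i_{\alpha\beta}:=a^m_\alpha\pd_m a^i_\beta$ for the derivative of the $\beta$-th field along the $\alpha$-th field, holomorphicity kills every second derivative of a single factor and collapses each term into a product of first derivatives. A short computation then yields
\[
g^{m\bar n}\pd_m\pd_{\bar n}g^{i\bar j}=\sum_{\alpha,\beta}c^i_{\beta\alpha}\,\bar{c^j_{\beta\alpha}},\qquad
\pd_m g^{i\bar n}\,\pd_{\bar n}g^{m\bar j}=\sum_{\alpha,\beta}c^i_{\beta\alpha}\,\bar{c^j_{\alpha\beta}},
\]
so that $\Theta^{i\bar j}=\sum_{\alpha,\beta}c^i_{\beta\alpha}\bigl(\bar{c^j_{\beta\alpha}}-\bar{c^j_{\alpha\beta}}\bigr)$.

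It remains to identify this with the claimed tensor. Recalling that the Lie bracket reads $[s_\alpha,s_\beta]^i=c^i_{\alpha\beta}-c^i_{\beta\alpha}$ in coordinates, I would expand
\[
\tfrac12\sum_{\alpha,\beta}[s_\alpha,s_\beta]^i\,\bar{[s_\alpha,s_\beta]^j}
=\tfrac12\sum_{\alpha,\beta}\bigl(c^i_{\alpha\beta}-c^i_{\beta\alpha}\bigr)\bigl(\bar{c^j_{\alpha\beta}}-\bar{c^j_{\beta\alpha}}\bigr)
\]
and symmetrize the double sum by the relabeling $\alpha\leftrightarrow\beta$. Under this swap the two diagonal products become equal and the two cross products become equal; this absorbs the factor $\tfrac12$ and returns precisely $\sum_{\alpha,\beta}c^i_{\beta\alpha}\bigl(\bar{c^j_{\beta\alpha}}-\bar{c^j_{\alpha\beta}}\bigr)=\Theta^{i\bar j}$. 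Since both sides are sections of $\Sym^{1,1}(TM)$ and an $h$-orthonormal frame is available at every point, the componentwise identity proves the proposition.

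The computation is essentially bookkeeping, and the only place demanding care—rather than a genuine obstacle—is the antisymmetrization at the end: one must track the asymmetric index order (the left factor always carries $c^i_{\beta\alpha}$, not $c^i_{\alpha\beta}$) through both terms of~\eqref{eq:Theta_coord} and verify that the $\alpha\leftrightarrow\beta$ symmetry of the summation absorbs the factor $\tfrac12$ exactly.
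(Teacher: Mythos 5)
Your proof is correct and follows essentially the same route as the paper: substitute the coordinate expression~\eqref{eq:g_a_coord} for $g^{i\bar j}$ into Proposition~\ref{prop:Theta_coord}, use holomorphicity of the $a^i_\alpha$ to kill the unwanted derivatives, and symmetrize the double sum over $\alpha\leftrightarrow\beta$ to assemble the Lie brackets and absorb the factor $\tfrac12$. The only cosmetic difference is that you specialize to the $h$-orthonormal frame at the outset, whereas the paper carries the general $h^{\alpha\bar\beta}$ through the computation; this changes nothing of substance.
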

\begin{proof}
First we note, that since vector fields $s_\alpha=a^i_{\alpha}\pd/\pd z_i$ are holomorphic, all the derivatives $\bar{\pd} a^i_\alpha$, $\pd a^{\bar j}_{\bar \beta}$ vanish. Using this fact and equations~\eqref{eq:Theta_coord} and~\eqref{eq:g_a_coord} we find
\begin{equation}\label{eq:Theta_lie_bracket}
\begin{split}
\Theta(g)^{i\bar j}&=
g^{m\bar n}\pd_{m}\pd_{\bar n}g^{i\bar j}-\pd_mg^{i\bar n}\pd_{\bar n}g^{m\bar j}=\\&=
h^{\gamma\bar\delta}a_\gamma^m a_{\bar\delta}^{\bar n} h^{\alpha\bar\beta}\pd_m a^i_\alpha \pd_{\bar n}a^{\bar j}_\beta-
h^{\gamma\bar\delta}\pd_m a^i_{\gamma} a^{\bar n}_{\bar\delta}h^{\alpha\bar\beta}a_\alpha^m\pd_{\bar n}a^{\bar j}_{\bar\beta}=\\&=
h^{\alpha\bar\beta}h^{\gamma\bar\delta}
	\bigl(
		a_\gamma^m\pd_m a_\alpha^i\cdot a_{\bar\delta}^{\bar n}\pd_{\bar n}a_{\bar\beta}^{\bar j}-
		a_\alpha^m\pd_m a_\gamma^i\cdot a_{\bar\delta}^{\bar n}\pd_{\bar n}a_{\bar\beta}^{\bar j}
	\bigr)=\\&=
\frac{1}{2}h^{\alpha\bar\beta}h^{\gamma\bar\delta}
	\bigl(
		a_\gamma^m\pd_m a_\alpha^i-a_\alpha^m\pd_m a_\gamma^i
	\bigr)\cdot
	\bigl(
		a_{\bar\delta}^{\bar n}\pd_{\bar n}a_{\bar\beta}^{\bar j}-
		a_{\bar\beta}^{\bar n}\pd_{\bar n}a_{\bar\delta}^{\bar j}
	\bigr).
\end{split}
\end{equation}
In the last equality we used the fact that the whole expression is symmetric under the change $(\alpha\bar{\beta})\leftrightarrow(\gamma\bar{\delta})$.
The last two multiples in the last expressions are exactly the $i$ and $\bar j$ coordinates of the Lie brackets $[a_\gamma^m\pd/\pd z_m,a_\alpha^m\pd/\pd z_m]$ and $[a_{\bar\delta}^{\bar n}\pd/\pd z_{\bar n},a_{\bar\beta}^{\bar n}\pd/\pd z_{\bar n}]$. Since $\{s_\alpha\}_{\alpha=1}^m$ is a $h$-orthonormal basis of $\Gamma(M,TM)$, we get the stated formula.
\end{proof}
The expression~\eqref{eq:Theta_lie_bracket} for $\Theta(g)$ suggests that the HCF with $g_0=\mathrm{ev}_*h \in\mc M^{\rm ind}(M)$ is governed by the Lie algebra of holomorphic vector fields $\Gamma(M,TM)$. In the next sections we study this relation.


\section{Operation \# on a Lie algebra}\label{sec:sharp_definition}
Before studying the HCF on complex homogeneous manifolds following Hamilton~\cite{ha-86} we define an algebraic operation on the second tensor power of a Lie algebra and list its basic properties.

\begin{definition}\label{def:sharp}
Let $\mf g_\R$ be a real Lie algebra. Define a symmetric bilinear $\mathrm{ad}\,\mf g_\R$-invariant operation
\[
\#\colon \mf g_\R^{\otimes 2}\otimes \mf g_\R^{\otimes 2}\to \mf g_\R^{\otimes 2},
\]
by the formula
\begin{equation}
(v_1\otimes v_2)\#(w_1\otimes w_2)=[v_1,w_1]\otimes [v_2,w_2].
\end{equation}
If we choose a basis $\{e_\alpha\}$ of $\mf g_\R$ and denote by $c_{\alpha\beta}^\gamma$ its structure constants, then for $h=\{h^{\alpha\beta}\}$, $k=\{k^{\alpha\beta}\}$, $h,k\in\mf g_\R^{\otimes 2}$
\[
(h\#k)^{\alpha\beta}=
c_{\epsilon\delta}^\alpha c_{\gamma\theta}^{\beta}
h^{\epsilon\gamma}k^{\delta\theta}.
\]

Clearly, the $\#$-operation preserves the parity of the decomposition $\mf g_\R^{\otimes 2}=\Sym^2(\mf g_\R)\oplus \Lambda^2(\mf g_\R)$, i.e., defines the maps
\begin{equation}
\begin{split}
\#&\colon \Sym^2(\mf g_\R)\otimes \Sym^2(\mf g_\R)\to \Sym^2(\mf g_\R),\\
\#&\colon \mathrm{\Lambda}^2(\mf g_\R)\otimes \mathrm{\Lambda}^2(\mf g_\R)\to \Sym^2(\mf g_\R),\\
\#&\colon \mathrm{\Lambda}^2(\mf g_\R)\otimes \Sym^2(\mf g_\R)\to \mathrm{\Lambda}^2(\mf g_\R).
\end{split}
\end{equation}
Now, let $\mf g$ be a complex Lie algebra. Similarly to the real case we denote by the symbol $\#$ the map
\[
\#\colon \mf (\mf g\otimes\bar{\mf g})\otimes (\mf g\otimes\bar{\mf g})\to (\mf g\otimes\bar{\mf g}),
\]
\[
(v_1\otimes\bar v_2)\# (w_1\otimes \bar w_2)=[v_1,w_1]\otimes [\bar v_2,\bar w_2].
\]
As in the real case, $\#$ preserves parity of $\mf g\otimes \bar{\mf g}$, in particular, induces a bilinear map on the set of Hermitian elements of $\mf g\otimes\bar{\mf g}$
\[
\#\colon \Sym^{1,1}(\mf g)\otimes \Sym^{1,1}(\mf g)\to \Sym^{1,1}(\mf g).
\]
We will write $h^{\#}:=\frac{1}{2}h\#h$ for the square of the $\#$-operation.
\end{definition}

\begin{remark}
The $\#$-operation was introduced by Hamilton in the context of the Ricci flow, see~\cite{ha-86}. Definition~\ref{def:sharp} differs from the one of Hamilton in several aspects:
\begin{enumerate}
\item originally $\#$ was defined only for the real Lie algebra $\mf{so}(n)$, while our definition makes sense for any Lie algebra;
\item Hamilton used $\#$ only for the part of $\mathrm{Sym^2}(\mf{so}(n))$ satisfying the first Bianchi identity;
\item Hamilton used the Killing metric to interpret $\#$ as a bilinear operator on the space self-adjoint operations.
\end{enumerate}
This operation and its algebraic properties play the key role in the characterization of compact manifolds with 2-positive curvature operator~\cite{bo-wi-08}. For an arbitrary real metric Lie algebra this operation was also considered by Wilking in~\cite[\S 3]{wi-13}.
\end{remark}

\begin{remark}\label{rk:natural}
Operation $\#$ is natural, i.e., if $\rho\colon \mf g\to \mf h$ is a homomorphism of Lie algebras, then $\rho(h)\#\rho(k)=\rho(h\#k)$ for any $h,k\in\Sym^{1,1}(\mf g)$.
\end{remark}

The following lemma easily follows from the definition by considering a basis of $\mf g$ which diagonalizes the two forms.

\begin{lemma}\label{lemma:sharp_positive}
Let $\mf g$ be a real (resp. complex) Lie algebra. Assume that forms $h,k\in \Sym^2(\mf g)$ (resp. $h,k\in \Sym^{1,1}(\mf g)$) are symmetric (resp. Hermitian) positive definite. Then $h\# k$ is positive semidefinite with $\ker(h\# k)=\mathrm{Ann}([\mf g,\mf g])\subset \mf g^*$.
\end{lemma}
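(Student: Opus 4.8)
The plan is to exploit the fact that $\#$ is defined intrinsically (via the bracket and tensor product), so that $h\#k$ may be computed in whatever basis of $\mf g$ is most convenient, and then to choose a basis adapted to both forms at once. Since $h$ and $k$ are positive definite Hermitian forms on $\mf g^*$, the standard simultaneous diagonalization of a pair of positive definite Hermitian forms produces a basis $\{e_\alpha\}$ of $\mf g$ whose dual basis is orthonormal for $h$ and orthogonal for $k$; concretely $h=\sum_\alpha e_\alpha\otimes\bar e_\alpha$ and $k=\sum_\alpha\lambda_\alpha\,e_\alpha\otimes\bar e_\alpha$ with all $\lambda_\alpha>0$.

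Substituting these expansions into Definition~\ref{def:sharp} and using bilinearity, I would compute
\[
h\#k=\sum_{\alpha,\beta}\lambda_\beta\,(e_\alpha\otimes\bar e_\alpha)\#(e_\beta\otimes\bar e_\beta)=\sum_{\alpha,\beta}\lambda_\beta\,[e_\alpha,e_\beta]\otimes\overline{[e_\alpha,e_\beta]}.
\]
Each summand $[e_\alpha,e_\beta]\otimes\overline{[e_\alpha,e_\beta]}$ is a rank-one Hermitian form of the shape $w\otimes\bar w$, hence manifestly positive semidefinite; since every $\lambda_\beta>0$, the full sum is a nonnegative combination of such forms and is therefore positive semidefinite. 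This proves the first assertion.

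For the kernel I would evaluate the form on a covector $\xi\in\mf g^*$, using $(w\otimes\bar w)(\xi,\bar\xi)=|\xi(w)|^2$, to obtain
\[
(h\#k)(\xi,\bar\xi)=\sum_{\alpha,\beta}\lambda_\beta\,\bigl|\xi([e_\alpha,e_\beta])\bigr|^2.
\]
As each $\lambda_\beta>0$, this vanishes exactly when $\xi([e_\alpha,e_\beta])=0$ for all $\alpha,\beta$. Since the brackets $[e_\alpha,e_\beta]$ span $[\mf g,\mf g]$, that condition is equivalent to $\xi|_{[\mf g,\mf g]}=0$, i.e.\ $\xi\in\mathrm{Ann}([\mf g,\mf g])$. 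For a positive semidefinite Hermitian form the kernel coincides with its null cone $\{\xi:(h\#k)(\xi,\bar\xi)=0\}$, so $\ker(h\#k)=\mathrm{Ann}([\mf g,\mf g])$, as claimed. The real case is identical after dropping the conjugations and diagonalizing the symmetric forms $h,k$ instead.

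The argument is short, and the only point requiring care is the bookkeeping of variance: because $h$ and $k$ are forms on $\mf g^*$, ``diagonalizing'' them means adapting a basis of $\mf g$ as above, and because $\#$ is basis-independent no Jacobian or correction terms enter the change of basis. I do not expect a genuine obstacle beyond keeping this accounting straight.
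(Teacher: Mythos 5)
Your proof is correct and follows exactly the route the paper indicates: the paper gives no written proof beyond the remark that the lemma ``easily follows from the definition by considering a basis of $\mf g$ which diagonalizes the two forms,'' which is precisely your simultaneous diagonalization yielding $h\#k=\sum_{\alpha,\beta}\lambda_\beta\,[e_\alpha,e_\beta]\otimes\overline{[e_\alpha,e_\beta]}$ as a positive combination of rank-one Hermitian forms. The identification of the kernel with $\mathrm{Ann}([\mf g,\mf g])$ via the null cone is the intended argument, so nothing further is needed.
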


\begin{example}\label{ex:sharp_so3}
Let $\mf g_\R=\mf{su}(2)$ and denote by $\langle\ ,\ \rangle$ the invariant metric on $\mf g_\R$ normalized in such a way that $\langle[e_1,e_2],e_3\rangle=\pm 1$ for any orthonormal triple $e_1,e_2,e_3$. Take $h_\R\in \Sym^2(\mf g_\R)$ and chose a $\langle\ ,\ \rangle$-orthonormal basis $e_1,e_2,e_3$, which diagonalizes $h_\R$ with eigenvalues $\lambda_1,\lambda_2,\lambda_3$. Then 
\begin{equation}
\begin{split}
h_\R^\#&=
1/2(\lambda_1e_1\otimes e_1+\lambda_2e_2\otimes e_2+\lambda_3e_3\otimes e_3)\#
(\lambda_1e_1\otimes e_1+\lambda_2e_2\otimes e_2+\lambda_3e_3\otimes e_3)\\&=
(\lambda_2\lambda_3e_1\otimes e_1+\lambda_1\lambda_3e_2\otimes e_2+\lambda_1\lambda_2e_3\otimes e_3)
\end{split}
\end{equation}
is diagonalized in the same basis with the eigenvalues $\lambda_2\lambda_3, \lambda_1\lambda_3, \lambda_1\lambda_2$. In particular, if $h_\R$ is proportional to the dual of the metric $\langle\ , \ \rangle$ then so is $h_\R^\#$.
\end{example}

\begin{example}\label{ex:sharp_simple}
Let $\mf g_\R$ be a compact simple Lie algebra with an invariant metric $\langle\ ,\ \rangle$. Let $h_\R=\langle\ ,\ \rangle^{-1}$ be the dual of $\langle\ ,\ \rangle$, i.e., for an orthonormal basis $e_1,\dots,e_m$ let $h_\R=\sum e_i\otimes e_i$. Then $h_\R^\#$ is proportional to $h_\R$ with a positive factor.

Indeed, since $\mf g_\R$ is simple, $[\mf g_\R, \mf g_\R]=\mf g_\R$ and by Lemma~\ref{lemma:sharp_positive} both $h_\R$ and $h_\R^\#$ are positive definite $\mathrm{ad}\,\mf g_\R$-invariant elements in $\Sym^2(\mf g_\R)$. As $\mf g_\R$ is simple, such an element is unique up to multiplication by a positive constant, so $h_\R^\#=\lambda h_\R$, $\lambda>0$.
\end{example}

We expect that proportionality $h^\#=\lambda h$ characterizes the $\mathrm{ad}\,\mf g_\R$-invariant positive definite forms on any compact simple Lie algebra.
\begin{problem}
Let $\mf g_\R$ be a compact simple real Lie algebra with an invariant metric $\langle\ ,\ \rangle$. Let $h\in \Sym^2(\mf g_\R)$ be a positive definite form such that $h^\#=\lambda h$. Is $h$ proportional to $\langle\ ,\ \rangle^{-1}$?
\end{problem}

\begin{remark}
If $\mf g_\R$ is a real Lie algebra equipped with an invariant metric $\langle\ ,\ \rangle$ extended in an obvious way to all tensor products of $\mf g_\R$, then a trilinear form
\[
P(h,k,l):=\langle h\#k, l \rangle,\quad h,k,l\in {\mf g_\R}^{\otimes 2}.
\]
is symmetric. It follows from the coordinate expression for $h\# k$ through the structure constants and the fact that in any $\langle\ ,\ \rangle$-orthonormal basis the structure constants are totally skew-symmetric.
\end{remark}

\begin{remark}\label{rk:lie_real_2_complex}
If $\mf g=\mf g_\R\otimes\C$ be the complexification of a real Lie algebra, then a real symmetric form $h_\R\in \Sym^2(\mf g_\R)$ defines an Hermitian form $\phi(h_\R)\in \Sym^{1,1}(\mf g)$: in the basis $\{e_\alpha\}$ of $\mf g_\R$ and the corresponding basis of $\mf g$ this form is given by $\phi(h_\R)_{\alpha\bar{\beta}}:=(h_\R)_{\alpha\beta}$.  It is clear that $\phi(h_\R)\#\phi(h_\R)=\phi(h_\R\#h_\R)$.
\end{remark}


\section{ODE for the induced metrics on complex homogeneous manifolds}\label{sec:hcf_homogeneous}
In this section we assume that $M=G/H$ is a complex homogeneous space with an effective action of a complex Lie group $G$. In this case the Lie algebra $\mf g$ of $G$ naturally embeds into $\Gamma(M,TM)$ and generates $TM$. For $x\in M$ we denote by
\[
\mathrm{ev}_x\colon \mf g\to T_xM,\quad i_x\colon T^*_xM\to\mf g^*
\]
the surjective evaluation map, and the dual embedding. If $M$ is compact and $G=\mathrm{Aut^0}(M)$ then the Lie algebra of $G$ equals $\Gamma(M,TM)$ (see~\cite[\S 2.3]{akh-95}).
In any case the Lie bracket of $\mf g$ coincides with the Lie bracket of vector fields on~$M$.

Let $g_0\in\mc M^{\rm ind}(M)$ be a metric on $M=G/H$ induced from an Hermitian metric $h^{-1}$ on $\mf g$, where $h\in\Sym^{1,1}(\mf g)$. Note that if a group $K$ acts on $M=G/H$ and actions of $G$ and $K$ commute, then the metric $g_0$ is $K$-invariant. We will use this observation Example~\ref{ex:hopf} below.

With Definition~\ref{def:sharp} and Proposition~\ref{prop:hcf_lie_bracket} we can reduce the HCF on a complex homogeneous manifold $(M,g_0,J)$ to an ODE for $h\in \Sym^{1,1}(\mf g)$.

\begin{theorem}\label{thm:ODE_reduction}
Let $M=G/H$ be a complex homogeneous manifold equipped with an induced Hermitian metric $g_0=\mathrm{ev}_*(h_0^{-1})\in \mc M^{\rm ind}(M)$, where $h_0\in \mathrm Sym^{1,1}(\mf g)$.
Let $h(t)$ be the solution to the ODE
\begin{equation}\label{eq:sharp_ODE}
\begin{cases}
\pd_t h=h^\#,\\
h(0)=h_0.
\end{cases}
\end{equation}
Then $g(t)=\mathrm{ev}_*(h(t)^{-1})$ solves the HCF on $(M,g_0,J)$. In particular $g(t)\in\mc M^{\rm ind}(M)$.
\end{theorem}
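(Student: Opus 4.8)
The plan is to use Corollary~\ref{cor:HCF_inverse}: rather than check the HCF equation for $g(t)$ directly, I would verify that the dual form $\tilde g(t):=g(t)^{-1}\in\Sym^{1,1}(TM)$ solves the PDE~\eqref{eq:HCF_for_g_inverse}, whose right-hand side is precisely $\Theta(g(t))^{i\bar j}$ by Proposition~\ref{prop:Theta_coord}. The whole argument then amounts to identifying both $\pd_t\tilde g(t)$ and $\Theta(g(t))$ with the image under the evaluation map of the single algebraic object $h(t)^\#\in\Sym^{1,1}(\mf g)$.

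First I would fix a basis $\{e_\alpha\}$ of $\mf g$ with $\mathrm{ev}(e_\alpha)=a^i_\alpha\,\pd/\pd z_i$, so that by~\eqref{eq:g_a_coord} the dual metric of $g(t)=\mathrm{ev}_*(h(t)^{-1})$ is $\tilde g(t)^{i\bar j}=a^i_\alpha a^{\bar j}_{\bar\beta}\,h(t)^{\alpha\bar\beta}$; invariantly, $\tilde g(t)$ is the image of $h(t)\in\mf g\otimes\bar{\mf g}$ under $\mathrm{ev}\otimes\overline{\mathrm{ev}}$, and in particular $\tilde g(0)=g_0^{-1}$. Because the $a^i_\alpha$ are holomorphic and $t$-independent, the ODE~\eqref{eq:sharp_ODE} gives at once
\[
\pd_t\tilde g(t)^{i\bar j}=a^i_\alpha a^{\bar j}_{\bar\beta}\,\pd_t h(t)^{\alpha\bar\beta}=a^i_\alpha a^{\bar j}_{\bar\beta}\,(h(t)^\#)^{\alpha\bar\beta}.
\]

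Next I would compute $\Theta(g(t))$ through Proposition~\ref{prop:hcf_lie_bracket}, which expresses it via an orthonormal frame $\{s_\alpha\}$ of $\mf g$ for the inducing metric $h(t)^{-1}$ as $\tfrac12\sum_{\alpha,\beta}[s_\alpha,s_\beta]\otimes\overline{[s_\alpha,s_\beta]}$. The crucial observation is that for an $h(t)^{-1}$-orthonormal frame one has $\sum_\alpha s_\alpha\otimes\bar s_\alpha=h(t)$ in $\mf g\otimes\bar{\mf g}$, since this element is nothing but the inverse of the metric $h(t)^{-1}$. Substituting into Definition~\ref{def:sharp} and using that brackets of the vector fields $s_\alpha$ agree with brackets in $\mf g$,
\[
\Theta(g(t))=\tfrac12\Bigl(\sum_\alpha s_\alpha\otimes\bar s_\alpha\Bigr)\#\Bigl(\sum_\beta s_\beta\otimes\bar s_\beta\Bigr)=\tfrac12\,h(t)\#h(t)=h(t)^\#,
\]
understood as the image of $h(t)^\#$ under $\mathrm{ev}$; in coordinates $\Theta(g(t))^{i\bar j}=a^i_\alpha a^{\bar j}_{\bar\beta}(h(t)^\#)^{\alpha\bar\beta}$, matching the previous display.

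Finally I would invoke Corollary~\ref{cor:HCF_inverse}. By Lemma~\ref{lemma:sharp_positive} the form $h(t)^\#$ is positive semidefinite as long as $h(t)$ is positive definite, so $\pd_t h\ge0$ forces $h(t)\ge h_0>0$ throughout the existence interval; consequently $\tilde g(t)$, being the restriction of the positive form $h(t)$ along the injection $\mathrm{ev}^*\colon T^*M\hookrightarrow\mf g^*$, stays positive definite and $g(t)=\tilde g(t)^{-1}=\mathrm{ev}_*(h(t)^{-1})\in\mc M^{\rm ind}(M)$ is a genuine Hermitian metric. Since $\tilde g(t)$ solves~\eqref{eq:HCF_for_g_inverse} with the correct initial condition, the Corollary yields that $g(t)$ solves the HCF. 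I expect the only delicate points to be bookkeeping ones: tracking which forms live on $\mf g$ versus $\mf g^*$ (so that ``orthonormal for $h^{-1}$'' correctly yields $\sum s_\alpha\otimes\bar s_\alpha=h$), and observing that the verification is pointwise and hence valid for non-compact $M$ as well, where the general existence theory of~\cite{st-ti-11} does not apply.
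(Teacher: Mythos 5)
Your proof is correct and follows essentially the same route as the paper: it passes to the dual form $\tilde g=g^{-1}$ via Corollary~\ref{cor:HCF_inverse} and identifies $\Theta(g(t))$ with $\mathrm{ev}(h(t)^\#)$ through Proposition~\ref{prop:hcf_lie_bracket}, which is exactly the paper's argument. Your additional remark that Lemma~\ref{lemma:sharp_positive} keeps $h(t)$ positive definite (so that $g(t)$ remains a genuine metric) is a worthwhile detail that the paper leaves implicit.
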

\begin{proof}
If $h(t)$ solves~\eqref{eq:sharp_ODE}, then by Proposition~\ref{prop:hcf_lie_bracket} the Hermitian metric $\tilde{g}(t)=i^*h(t)$ on $T^*M$ satisfies the partial differential equation
\[
\begin{cases}
\pd_t \tilde{g}=\Theta(\tilde{g}^{-1}),\\
\tilde{g}(0)=g_0^{-1},
\end{cases}
\]
where, as in Proposition~\ref{prop:Theta_coord}, $\Theta$ is identified with a section of $\Sym^{1,1}(TM)$.
Hence $g(t)=\tilde{g}(t)^{-1}$ is the solution to the HCF on $(M,g_0,J)$.
\end{proof}

Surprising consequence of this theorem is that ODE~\eqref{eq:sharp_ODE}
gives solutions to the HCF on all $G$-homogeneous manifolds $M=G/H$	 equipped with an induced metric independently of the isotropy subgroup~$H$.

\begin{example}\label{ex:ode_3d}
Let $G=SL(2,\C)$. Lie algebra of $G$ has the compact real form $\mf{su}(2)$, i.e., $\mf{sl}(2,\C)=\mf{su}(2)\otimes\C$. Assume that $h_0\in \Sym^{1,1}(\mf {sl}(2,\C))$ corresponds to $h_\R\in \Sym^{1,1}(\mf{su}(2))$ (see Remark~\ref{rk:lie_real_2_complex}). Then ODE~\eqref{eq:sharp_ODE} reduces to the equation for $h_\R\in \Sym^2(\mf{su}(2))$
\[
\pd_t h_\R =h_\R^\#.
\]
Let $\langle\ ,\ \rangle$ be a positive definite multiple of the Killing form of $\mf{su}(2)$. Assume that $\langle\ ,\ \rangle$ is normalized in such way that for any orthonormal basis $e_1,e_2,e_3$ we have $\langle[e_1,e_2],e_3\rangle=\pm 1$.

Let $e_1,e_2,e_3$ be an orthonormal basis of $\mf{su}(2)$ diagonalizing $h_\R$. Denote the eigenvalues of $h_\R$ with respect to $\langle\ ,\ \rangle$ by $\lambda_1,\lambda_2,\lambda_3$. In this basis the evolution equation takes the form
\begin{equation}\label{eq:example_ode_3d}
\begin{cases}
\pd_t \lambda_1=\lambda_2\lambda_3\\
\pd_t \lambda_2=\lambda_1\lambda_3\\
\pd_t \lambda_3=\lambda_1\lambda_2.
\end{cases}
\end{equation}
These equations imply that $\det{h_\R}=\lambda_1\lambda_2\lambda_3$ satisfies
\[
\pd_t\det{h_\R}=(\lambda_2\lambda_3)^2+(\lambda_1\lambda_3)^2+(\lambda_1\lambda_2)^2\ge3(\det h_\R)^{4/3}.
\]
So $\det{h_\R}(t)\ge 1/(C-t)^3$ for some $C>0$ and the solution of~\eqref{eq:example_ode_3d} blows up as $t\to t_{\max}<\infty$. Moreover, for any $i,j\in\{1,2,3\}$
\[
\pd_t(\lambda_i^2-\lambda_j^2)=0,
\]
hence all $\lambda_i\to +\infty$ as $t\to t_{\max}$, $i\in\{1,2,3\}$ and $\lambda_i/\lambda_j\to 1$. It follows that $h_\R(t)$ pinches towards the (dual of the) Killing form:
\[
h_\R(t)/|h_\R(t)|_\infty\to \langle\ ,\ \rangle^{-1}=
e_1\otimes e_1+e_2\otimes e_2+e_3\otimes e_3.
\]
\end{example}

\begin{example}[Diagonal Hopf surface]\label{ex:hopf}
Diagonal Hopf surface is the quotient $M=\bigl(\C^2\backslash (0,0)\bigr)/\Gamma$, where the generator of $\Gamma\simeq\Z$ acts as $(z_1,z_2)\mapsto(\lambda z_1,\lambda z_2)$ for some $\lambda\in\C$ with $|\lambda|>1$. $M$ is a compact complex manifold diffeomorphic to $S^3\times S^1$. Note that the natural action of $SL(2,\C)$ on $\C^2$ commutes with $\Gamma$, hence descends to the transitive action on $M$.

As in Example~\ref{ex:ode_3d} any element $h_\R\in\Sym^{2}(\mf{su}(2))$ defines a metric $g_0$ on $M$. By the computation of Example~\ref{ex:ode_3d} and Theorem~\ref{thm:ODE_reduction} under the HCF this metric converges after normalization to the metric $g_{\infty}=\mathrm{ev_*}(\langle\ ,\ \rangle)$. Since $\langle\ ,\ \rangle$ is $SU(2)$-invariant, the limiting metric $g_\infty$ is also $SU(2)$-invariant. Moreover, since the action of $SL(2,\C)$ on $\C^2$ commutes with the scalings, the metric $g_\infty$ pulled back to $\C^2$ is also scaling-invariant. Such a metric is unique up to multiplication by a positive constant $\lambda$:
\[
g_\infty=\lambda\frac{dz_1\otimes d\bar z_1+dz_2\otimes d\bar z_2}{|z_1|^2+|z_2|^2},
\]
where $z_1,z_2$ are coordinates in $\C^2$. This is the so-called \emph{round} metric on a Hopf surface.
\end{example}

Example~\ref{ex:ode_3d} demonstrates the expected behavior of the ODE~\eqref{eq:sharp_ODE} for any complex Lie group $G$ with a simple compact real form. Namely, assume that $\mf g=\mf g_\R\otimes \C$ is the complexification of a simple compact real Lie algebra $\mf g_\R$ with an invariant metric $\langle\ ,\ \rangle$. Let us denote by
\[
\kappa\in\Sym^{1,1}(\mf g)
\] the element corresponding to $\langle\ ,\ \rangle^{-1}\in \Sym^{2}(\mf g_\R)$ (as in Remark~\ref{rk:lie_real_2_complex}). We refer to $\kappa$ as the Killing form. For such $G$ and $\kappa$ we propose the following conjecture.

\begin{conjecture}\label{conj:pinching}
Let $h(t)$ be the solution to the ODE~\eqref{eq:sharp_ODE} on the maximal time interval $[0,t_{\max})$. Then there exists $g\in G$, $\lambda\in\R$ such that $h(t)$ pinches towards $\lambda\mathrm{Ad}_g(\kappa)$:
\[
h(t)/|h(t)|_{\infty}\to \lambda\mathrm{Ad}_g(\kappa),\ t\to t_{\max}.
\]
\end{conjecture}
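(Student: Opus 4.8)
The first thing I would exploit is that the ODE $\pd_t h=h^\#$ is a \emph{gradient} flow. Using the Killing form $\kappa$ to fix a real inner product $\langle\cdot,\cdot\rangle$ on $\Sym^{1,1}(\mf g)$, the trilinear form $P(h,k,l)=\langle h\#k,l\rangle$ is totally symmetric -- this is the Hermitian analogue of the Remark following the open problem in Section~\ref{sec:sharp_definition}, and follows from the total antisymmetry of the structure constants of $\mf g_\R$ in a $\kappa$-orthonormal basis. Consequently $h^\#=\tfrac12 h\#h$ is the $\langle\cdot,\cdot\rangle$-gradient of the cubic functional
\[
F(h)=\tfrac16\langle h\#h,h\rangle,
\]
so that $\pd_t h=\nabla F(h)$. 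Two further structural facts I would record at the outset: the flow is $G$-equivariant (by naturality of $\#$, Remark~\ref{rk:natural}, since each $\mathrm{Ad}_g$ is a Lie algebra automorphism), and it is equivariant under $h\mapsto ch$ up to reparametrizing time (since $\#$ is quadratic). Finally, because $\mf g_\R$ is simple, $[\mf g,\mf g]=\mf g$, so Lemma~\ref{lemma:sharp_positive} gives $h^\#>0$ whenever $h>0$; hence positivity is preserved, $h(t)$ is strictly increasing, and the determinant estimate of Example~\ref{ex:ode_3d} shows that $h(t)$ blows up at a finite $t_{\max}$.

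Because the problem is scale-invariant, I would then pass to the projectivized flow. Setting $r(t)=|h(t)|$ and $\hat h=h/r$, the normalized curve $\hat h$ lies on the unit sphere $S\subset\Sym^{1,1}(\mf g)$ inside the positive cone, and a direct computation shows that, after a time reparametrization, $\hat h$ follows the gradient flow of the restriction $F|_S$, whose equilibria are exactly the \emph{scale-static} forms $\hat h^\#=\mu\hat h$. By Example~\ref{ex:sharp_simple}, $\kappa$, and hence every $\mathrm{Ad}_g\kappa$, is such an equilibrium. Since $F$ is a polynomial, hence real-analytic, the \L ojasiewicz--Simon gradient inequality applies to $F|_S$; because $r\to\infty$ forces the reparametrized time to tend to $+\infty$, it yields that, \emph{provided $\hat h(t)$ remains in a compact subset of the positive cone}, the $\omega$-limit is a \emph{single} critical point $\hat h_\infty$ with $\hat h_\infty^\#=\mu\hat h_\infty$. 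This is the mechanism I would use to produce the limiting direction asserted in the conjecture.

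It then remains to identify $\hat h_\infty$ with a point of the $G$-orbit of $\kappa$, and this is where the two genuine difficulties lie. The first is the \textbf{non-degeneracy} of the limit: the orbit of $[\kappa]$ in $S$ is the non-compact symmetric space $G/G_\R$ (the stabilizer of $\kappa$ being the compact real form), so a priori the normalized flow could drift toward the boundary of this orbit, where the limiting form is only positive \emph{semi}definite. Ruling this out requires quantitative control on the eigenvalue ratios of the $\kappa$-self-adjoint endomorphism $\kappa^{-1}h$; for $\mf g=\mf{sl}(2,\C)$ this is supplied by the conserved quantities $\lambda_i^2-\lambda_j^2$ of Example~\ref{ex:ode_3d}, and the task is to find the correct Lyapunov function for the eigenvalue spread in higher rank. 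The second difficulty is the \textbf{classification of positive-definite equilibria}: I must show that $h>0$ with $h^\#=\mu h$ forces $h\in\R_{>0}\,\mathrm{Ad}_g(\kappa)$. This is precisely the $G$-equivariant form of the open problem in Section~\ref{sec:sharp_definition}; once it is available, any non-degenerate limit $\hat h_\infty$ automatically lies in the $\kappa$-orbit, and the distinction between maxima and saddles becomes irrelevant, so the conjecture follows.

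I expect the classification of equilibria to be the principal obstacle, as the authors themselves flag it as open. A natural line of attack is to exploit the $G$-equivariance to reduce $h^\#=\mu h$ to a rigidity statement on $G/G_\R$: using the Cartan decomposition to put $h$ in a normal form relative to $\kappa$ and differentiating the equilibrium condition, one would try to show that $F$ is geodesically convex along the non-compact directions with the $\kappa$-orbit as its unique critical locus, much as the uniqueness of the bi-invariant metric on a simple algebra pins down a single balanced point. Combined with the \L ojasiewicz convergence of the projectivized flow and an eigenvalue-spread Lyapunov bound preventing degeneration, such a rigidity result would complete the proof.
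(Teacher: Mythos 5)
The statement you are proving is Conjecture~\ref{conj:pinching}; the paper offers no proof of it, only supporting evidence (Example~\ref{ex:ode_3d} for $\mf{sl}(2,\C)$ and the scale-static property of $\kappa$ in Theorem~\ref{thm:scale-static}), and explicitly says that resolving it requires a better understanding of the algebraic properties of $\#$. So there is no argument in the paper to compare yours against, and the question is only whether your proposal closes the conjecture. It does not: it is a plausible strategy outline with two gaps that you yourself flag, and those gaps are exactly the points the paper leaves open, not technicalities. First, the classification of positive-definite equilibria $h^\#=\mu h$ as the orbit $\R_{>0}\,\mathrm{Ad}_G(\kappa)$ is precisely the Problem posed in Section~\ref{sec:sharp_definition} (in its $G$-equivariant form), and nothing in your sketch resolves it; without it, even a convergent normalized trajectory need not limit to $\lambda\,\mathrm{Ad}_g(\kappa)$. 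Second, the non-degeneracy of the limit --- keeping $\hat h(t)$ in a compact subset of the open positive cone --- is asserted to need ``the correct Lyapunov function for the eigenvalue spread in higher rank,'' which you do not produce; the conserved quantities $\lambda_i^2-\lambda_j^2$ of Example~\ref{ex:ode_3d} are special to rank one and there is no indication they generalize. Since the closure of the positive cone intersected with the unit sphere is compact, \L ojasiewicz convergence could well deliver a single limit that is only positive semidefinite, where neither Lemma~\ref{lemma:sharp_positive} nor any equilibrium classification is available.

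There is also an unverified foundational step: the gradient structure. The paper's symmetry remark for $P(h,k,l)=\langle h\#k,l\rangle$ is stated for a \emph{real} Lie algebra with an invariant metric and forms in $\mf g_\R^{\otimes 2}$; you need its Hermitian analogue on $\Sym^{1,1}(\mf g)$ for $\mf g=\mf g_\R\otimes\C$ with the inner product induced by $\kappa$. This is likely true (the structure constants in a real orthonormal basis are real and totally antisymmetric, and one can check the sesquilinear bookkeeping), but it must be checked, since the entire \L ojasiewicz mechanism rests on it. The remaining structural observations you make --- $G$-equivariance via Remark~\ref{rk:natural}, preservation of positivity via Lemma~\ref{lemma:sharp_positive} and $[\mf g,\mf g]=\mf g$, finite-time blow-up by comparison with $\epsilon(t)\kappa$ as in the proof of Theorem~\ref{thm:ODE_solvable}, and the reduction of the normalized flow to a gradient flow on the sphere --- are all sound and consistent with the paper. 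In short: this is a reasonable research program for attacking the conjecture, and you are candid about where it is incomplete, but it is not a proof, and the missing pieces are the substantive open problems rather than routine details.
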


Let complex Lie group $G$ and $\kappa\in\Sym^{1,1}(\mf g)$ be as above. The following result demonstrates the metric induced by $\kappa$ on a $G$-homogeneous manifold are HCF-Einstein, i.e., scale-static under the flow. This observation provides some evidence for Conjecture~\ref{conj:pinching} to be true.
\begin{theorem}\label{thm:scale-static}
Let $G$ be the complexification of a simple compact Lie group. Let $M=G/H$ be a complex homogeneous manifold equipped with the Hermitian metric $g_0=\mathrm{ev}_*(\kappa^{-1})$. Then $g_0$ is scale-static under the HCF, i.e., $\Theta(g_0)=\lambda g_0$ for some positive constant~$\lambda$.
\end{theorem}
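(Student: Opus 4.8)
The plan is to reduce the geometric statement $\Theta(g_0)=\lambda g_0$ to the purely algebraic identity $\kappa^\#=\lambda\kappa$ on $\mf g$, which is essentially already contained in Example~\ref{ex:sharp_simple}. The bridge between the two is Proposition~\ref{prop:hcf_lie_bracket} together with the bookkeeping in the proof of Theorem~\ref{thm:ODE_reduction}.

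First I would establish the algebraic eigen-identity. Write $\mf g=\mf g_\R\otimes\C$ with $\mf g_\R$ the compact real form carrying the invariant metric $\langle\ ,\ \rangle$, and recall that $\kappa=\phi(\langle\ ,\ \rangle^{-1})$ in the notation of Remark~\ref{rk:lie_real_2_complex}. By Example~\ref{ex:sharp_simple} the dual Killing form $h_\R=\langle\ ,\ \rangle^{-1}\in\Sym^2(\mf g_\R)$ satisfies $h_\R^\#=\lambda h_\R$ for some $\lambda>0$: it is the unique $\mathrm{ad}\,\mf g_\R$-invariant positive definite form up to scale, and $h_\R^\#$ is again such a form by Lemma~\ref{lemma:sharp_positive}. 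Since $\phi$ intertwines the real and complex $\#$-operations, $\kappa^\#=\phi(h_\R^\#)=\lambda\,\phi(h_\R)=\lambda\kappa$ in $\Sym^{1,1}(\mf g)$, with the same $\lambda>0$.

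Next I would translate this back to $M$. Taking an $\langle\ ,\ \rangle$-orthonormal frame $\{s_\alpha\}$ of $\mf g$, we have $\kappa=\sum_\alpha s_\alpha\otimes\bar s_\alpha$ as an element of $\mf g\otimes\bar{\mf g}$, so by Definition~\ref{def:sharp}
\[
\kappa^\#=\tfrac{1}{2}\,\kappa\#\kappa=\tfrac{1}{2}\sum_{\alpha,\beta}[s_\alpha,s_\beta]\otimes\bar{[s_\alpha,s_\beta]}.
\]
Applying $\mathrm{ev}_x$ to both tensor slots and comparing with Proposition~\ref{prop:hcf_lie_bracket} identifies $\Theta(g_0)=\mathrm{ev}_*(\kappa^\#)$ as a section of $\Sym^{1,1}(TM)$. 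Combining this with the eigen-identity gives $\Theta(g_0)=\lambda\,\mathrm{ev}_*(\kappa)$. Finally, by the coordinate description of induced metrics in~\eqref{eq:g_a_coord}, the inverse metric is $g_0^{-1}=\mathrm{ev}_*(\kappa)$ as a section of $\Sym^{1,1}(TM)$, so $\Theta(g_0)=\lambda g_0^{-1}$; lowering both indices with $g_0$ then yields $\Theta(g_0)_{i\bar j}=\lambda (g_0)_{i\bar j}$, i.e.\ $\Theta(g_0)=\lambda g_0$ with $\lambda>0$.

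I do not expect a genuine obstacle here: the Chern-curvature computation has been carried out once and for all in Proposition~\ref{prop:hcf_lie_bracket}, and the simple-Lie-algebra input is exactly Example~\ref{ex:sharp_simple}, so the theorem is essentially immediate from the earlier results. The one point that requires care is keeping track of \emph{which} object lives \emph{where} --- whether $\Theta$, $\kappa$, and $g_0$ are being viewed as Hermitian forms on $\mf g^*$ / $T^*M$ or as sections of $\Sym^{1,1}(\mf g)$ / $\Sym^{1,1}(TM)$ --- and making sure the push-forward $\mathrm{ev}_*$ and the metric inversion are applied on the correct side. Once the conventions of Definition~\ref{def:induced} and~\eqref{eq:g_a_coord} are pinned down, this is routine.
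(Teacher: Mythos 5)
Your proof is correct and follows essentially the same route as the paper: the key input is the eigen-identity $\kappa^\#=\lambda\kappa$ from Example~\ref{ex:sharp_simple} and Remark~\ref{rk:lie_real_2_complex}, combined with the reduction of $\Theta$ on induced metrics to the $\#$-operation. The only cosmetic difference is that the paper cites Theorem~\ref{thm:ODE_reduction} for the translation back to $M$, whereas you unpack that step directly through Proposition~\ref{prop:hcf_lie_bracket} and the coordinate formula~\eqref{eq:g_a_coord}.
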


\begin{proof}
In Example~\ref{ex:sharp_simple} we observed that for $h_\R=\langle\ ,\ \rangle^{-1}$
\[
h_\R^\#=\lambda h_\R.
\]
Hence for $\kappa=\phi(h_\R)$ (see Remark~\ref{rk:lie_real_2_complex}) we have $\kappa^\#=\lambda\kappa$. This fact together with Theorem~\ref{thm:ODE_reduction} imply that $g_0$ is scale static under the HCF.
\end{proof}


\section{Growth rate of the HCF solutions on Lie groups}\label{sec:ODE_algebraic_properties}
In this section we study the HCF on a complex Lie group $G$, equipped with an induced metric $g_0\in \mc M^{\rm ind}(G)$ ($g_0$ is identified with its restriction to $\mf g\simeq T_{\rm id} G$). By Theorem~\ref{thm:ODE_reduction} the HCF reduces to the ODE for $h(t)\in \Sym^{1,1}(\mf g)$ with $h_0=g_0^{-1}$
\begin{equation}\label{eq:ODE_sec_algebraic}
\begin{cases}
\pd_t h=h^\#\\
h(0)=h_0.
\end{cases}
\end{equation}
It turns out, that the growth rate of a solution $h(t)$ is completely determined by the algebraic properties of the underlying Lie algebra. Namely, $h(t)$ has polynomial, exponential growth or finite time blow-up, depending on whether $\mf g$ is nilpotent, solvable or admits a semisimple quotient.

Before we state and prove results on the growth rate of a solution to~\eqref{eq:ODE_sec_algebraic} let us make the following elementary observation.
\begin{proposition}\label{prop:ODE_bound}
Let $h(t),k(t)\in\Sym^{1,1}(\mf g)$ be the solutions to~\eqref{eq:ODE_sec_algebraic} with the initial conditions $h(0)=h_0$ and $k(0)=k_0$ such that $h_0\ge k_0\ge 0$. Then for $t\ge 0$
\[
h(t)\ge k(t).
\]
\end{proposition}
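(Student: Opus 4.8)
The plan is to introduce $D(t):=h(t)-k(t)$ and to prove that the closed convex cone $C\subset\Sym^{1,1}(\mf g)$ of positive semidefinite Hermitian forms is preserved by the ODE satisfied by $D$; since $D(0)=h_0-k_0\ge 0$ by hypothesis, invariance of $C$ yields $D(t)\ge 0$, which is the claim. First I would write down this ODE. As $\#$ is symmetric and bilinear,
\[
\pd_t D = h^\# - k^\# = \tfrac12\bigl(h\#h-k\#k\bigr)=\tfrac12\bigl(h\#(h-k)+(h-k)\#k\bigr)=\tfrac12\bigl(h\#D+D\#k\bigr),
\]
so $\pd_t D=L_t(D)$ with the time-dependent \emph{linear} operator $L_t(E):=\tfrac12\bigl(h(t)\#E+E\#k(t)\bigr)$.

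The algebraic input is the semidefinite sharpening of Lemma~\ref{lemma:sharp_positive}: if $a,b\in\Sym^{1,1}(\mf g)$ are positive semidefinite, then so is $a\#b$. Indeed, writing $a=\sum_i u_i\otimes\bar u_i$ and $b=\sum_j w_j\otimes\bar w_j$ with $u_i,w_j\in\mf g$, the definition of $\#$ gives $a\#b=\sum_{i,j}[u_i,w_j]\otimes\overline{[u_i,w_j]}\ge 0$. Applied to the autonomous equations $\pd_t h=\tfrac12 h\#h$ and $\pd_t k=\tfrac12 k\#k$, the same cone-invariance argument described below shows that $h(t),k(t)\ge 0$ throughout (recall $h_0\ge k_0\ge 0$). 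Consequently, for any $D\ge 0$ both $h\#D\ge 0$ and $D\#k\ge 0$, so $L_t$ maps $C$ into itself.

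With this in hand, invariance of $C$ follows from the maximum principle for ODEs on a closed convex set (Nagumo, in the form used by Hamilton~\cite{ha-86}): it suffices that $L_t(D)$ lie in the Bouligand tangent cone $T_D C$ for every $D\in\partial C$, and since $L_t(D)\in C\subseteq T_D C$ this holds automatically. (Equivalently, for $\xi\in\ker D$ one has $L_t(D)(\xi,\bar\xi)=\tfrac12\bigl((h\#D)(\xi,\bar\xi)+(D\#k)(\xi,\bar\xi)\bigr)\ge 0$.) Hence $C$ is positively invariant and $D(t)\ge 0$ on the common interval of existence of $h$ and $k$.

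The step I expect to require the most care is the rigorous justification of this invariance at $\partial C$: a hands-on proof tracks the smallest eigenvalue $\mu(t)$ of $D(t)$ relative to a fixed background metric $g_{\mathrm{bg}}$ and must cope with the fact that $\mu$ is merely Lipschitz and that the minimizing direction $\xi\in\ker D$ moves in $t$. The standard remedy is an $\varepsilon$-regularization: replace $D$ by $D_\varepsilon:=D+\varepsilon(1+At)\,g_{\mathrm{bg}}^{-1}$ with $A$ larger than the operator norm of $L_t$ over the relevant compact time interval, show that $D_\varepsilon$ remains strictly positive, and let $\varepsilon\to 0$; alternatively one simply quotes Nagumo's invariance theorem.
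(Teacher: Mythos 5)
Your proof is correct and follows the same overall strategy as the paper: form the difference $D=h-k$, observe that its time derivative is nonnegative on the cone of positive semidefinite forms, and invoke the ODE maximum principle (the paper cites Hamilton~\cite{ha-86}, Lemma~4.1, where you cite Nagumo; these play the same role). The one genuine difference is the algebraic step. The paper proves the monotonicity statement ``$h\ge k\ge 0$ implies $h^\#\ge k^\#$'' by simultaneously diagonalizing $h$ and $k$ and comparing eigenvalues, whereas you split $h^\#-k^\#=\tfrac12\bigl(h\#D+D\#k\bigr)$ by bilinearity and use only that $a\#b\ge 0$ for positive semidefinite $a,b$ (the semidefinite form of Lemma~\ref{lemma:sharp_positive}). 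Your variant is arguably cleaner: it sidesteps the question of whether two merely semidefinite Hermitian forms can be simultaneously diagonalized by congruence, and it makes explicit the point --- left implicit in the paper --- that one must also know $h(t),k(t)\ge 0$ along the flow before the comparison argument can be run. Both arguments are sound.
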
 
\begin{proof}
We claim that for $h,k\in\Sym^{1,1}(\mf g)$ if $h\ge k\ge 0$, then $h^\#\ge k^\#$. Indeed, let $\{e_i\}$ be a basis diagonalizing simultaneously $h$ and $k$:
\[
h=\sum a_i e_i\otimes\bar e_i,\quad k=\sum b_i e_i\otimes\bar e_i
\]
with $a_i\ge b_i\ge 0$. Then $h^\#=\sum_{i,j}a_ia_j[e_i,e_j]\otimes \bar{[e_i,e_j]}\ge \sum_{i,j}b_ib_j[e_i,e_j]\otimes \bar{[e_i,e_j]}=k^\#$. 

The proposition now follows from, e.g.,~\cite[Lemma~4.1]{ha-86}, as $h(0)-k(0)$ is non-negative and $\pd_t(h(t)-k(t))=h(t)^\#-k(t)^\#$.
\end{proof}

\begin{theorem}\label{thm:ODE_nilpotent}
For a complex Lie algebra $\mf g$ and a positive definite Hermitian form $h_0\in \Sym^{1,1}(\mf g)$ let $h(t)$ be the solution to the ODE~\eqref{eq:ODE_sec_algebraic}
on the maximal time interval $[0;t_\mathrm{max})$, $0<t_\mathrm{max}\le+\infty$. Then the following are equivalent:
\begin{enumerate}
\item[1.] $\mf g$ is a nilpotent Lie algebra;
\item[2.] for any initial condition $h_0$ the solution $h(t)$ has at most \emph{polynomial growth}, i.e., $t_{\max}=+\infty$ and there exists a polynomial $p$ such that
\[
h(t)<p(t) h_0;
\]
\item[3.] for some initial condition $h_0$ the solution $h(t)$ has \emph{subexponential growth}, i.e., $t_{\max}=+\infty$ and for any $\epsilon>0$ there exists $T_\epsilon>0$ such that for $t>T_\epsilon$
\[
h(t)<e^{\epsilon t} h_0.
\]
\end{enumerate}
\end{theorem}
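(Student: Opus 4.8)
The plan is to establish the cycle of implications $(1)\Rightarrow(2)\Rightarrow(3)\Rightarrow(1)$. The step $(2)\Rightarrow(3)$ is immediate: a bound $h(t)<p(t)h_0$ valid for every $h_0$ yields, for one chosen $h_0$ and any $\epsilon>0$, the estimate $h(t)<e^{\epsilon t}h_0$ once $p(t)<e^{\epsilon t}$, while $t_{\max}=+\infty$ is inherited. So the substance lies in $(1)\Rightarrow(2)$ and $(3)\Rightarrow(1)$.

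For $(1)\Rightarrow(2)$, I would exploit the lower central series $\mf g=\mf g_1\supseteq\mf g_2\supseteq\dots\supseteq\mf g_s\supseteq\mf g_{s+1}=0$, where $\mf g_{k+1}=[\mf g,\mf g_k]$, together with the standard inclusion $[\mf g_i,\mf g_j]\subseteq\mf g_{i+j}$. Fix a basis $\{e_\alpha\}$ adapted to this filtration and assign to $e_\alpha$ the weight $w(\alpha)=k$ when $e_\alpha\in\mf g_k\setminus\mf g_{k+1}$; then the structure constants satisfy $c_{\alpha\beta}^\gamma=0$ unless $w(\gamma)\ge w(\alpha)+w(\beta)$. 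Writing the ODE in coordinates as $\pd_t h^{\alpha\beta}=\tfrac12 c_{\epsilon\delta}^\alpha c_{\gamma\theta}^\beta h^{\epsilon\gamma}h^{\delta\theta}$ (Definition~\ref{def:sharp}), any nonzero term obeys $w(\epsilon)+w(\delta)\le w(\alpha)$ and $w(\gamma)+w(\theta)\le w(\beta)$, so the two factors $h^{\epsilon\gamma}$ and $h^{\delta\theta}$ have total weights at most $w(\alpha)+w(\beta)-2$, since every individual weight is $\ge1$. Strong induction on $N=w(\alpha)+w(\beta)$ then shows each entry $h^{\alpha\beta}(t)$ is a polynomial in $t$ (base case $N=2$: the derivative vanishes and the entry stays constant), with degree controlled by the nilpotency class $s$. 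In particular $t_{\max}=+\infty$; comparing the entrywise polynomial growth against the positive definite $h_0$ (using $h_0\ge c\,\mathrm{Id}$ for some $c>0$) upgrades this to the form inequality $h(t)<p(t)h_0$ for every initial datum.

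For $(3)\Rightarrow(1)$ I argue the contrapositive: if $\mf g$ is not nilpotent, then for every positive definite $h_0$ the solution fails to be subexponential. By Engel's theorem some $\ad X$ is not nilpotent, hence has an eigenvector $[X,Y]=\mu Y$ with $Y\ne0$, $\mu\ne0$; rescaling $X$ produces a two-dimensional non-abelian subalgebra $\mf s=\langle X,Y\rangle$ with $[X,Y]=Y$. A direct computation of $\#$ on $\mf s$ (only $c^Y_{XY}=1$ and $c^Y_{YX}=-1$ are nonzero) shows that, in the coordinates $a=h^{XX}$, $d=h^{YY}$, $b=h^{XY}$, the ODE reads $\pd_t a=\pd_t b=0$ and $\pd_t d=ad-|b|^2$, so that $a\equiv a_0>0$ and $d(t)$ grows like $e^{a_0 t}$ for any positive definite initial datum. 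Naturalness of $\#$ (Remark~\ref{rk:natural}) applied to the inclusion $\iota\colon\mf s\hookrightarrow\mf g$ shows that the pushforward $\iota(k(t))$ of any $\mf s$-solution solves the ODE on $\mf g$. Choosing a small positive multiple $k_0$ of a positive definite form on $\mf s$ so that the positive semidefinite form $\iota(k_0)$ satisfies $\iota(k_0)\le h_0$, the comparison principle (Proposition~\ref{prop:ODE_bound}) gives $h(t)\ge\iota(k(t))$. Evaluating on a covector $\xi\in\mf g^*$ with $\iota^*\xi=Y^*$ (which exists since $\iota^*$ is surjective) yields $h(t)(\xi,\bar\xi)\ge d(t)$, which grows exponentially with a strictly positive rate and hence violates $h(t)<e^{\epsilon t}h_0$ for small $\epsilon$; this rules out subexponential growth.

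The main obstacle I expect is the $(3)\Rightarrow(1)$ direction, and specifically the transfer of the explicit exponential growth on the model subalgebra $\mf s$ to a genuine lower bound on $h(t)$ over all of $\mf g$. The delicate points are that $\iota(k_0)$ is only positive semidefinite, so one must use the comparison principle in its degenerate form and rescale $k_0$ to fit under $h_0$ while retaining a strictly positive exponential rate, and that one must extract from the matrix inequality a scalar comparison (pairing against a fixed covector) that genuinely contradicts the definition of subexponential growth. By contrast, $(1)\Rightarrow(2)$ is a clean triangular induction once the weight filtration is in place.
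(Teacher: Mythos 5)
Your argument is correct, and it splits naturally into a part that mirrors the paper and a part that does not. For $(2)\Rightarrow(3)$ and $(3)\Rightarrow(1)$ you are essentially reproducing the paper's proof: the paper also extracts a non-nilpotent $\mathrm{ad}_x$ from Engel's theorem, takes an eigenvector $[x,y]=\lambda y$, writes down the explicit solution $a_0\,x\otimes\bar x+b_0e^{|\lambda|^2a_0t}\,y\otimes\bar y$, shrinks the initial form under $h_0$ and applies Proposition~\ref{prop:ODE_bound}; it simply works with this rank-two form directly inside $\Sym^{1,1}(\mf g)$ and takes the off-diagonal entry to be zero, so your packaging via the two-dimensional subalgebra $\mf s$, the inclusion homomorphism and Remark~\ref{rk:natural}, and the extra term $-|b|^2$ are harmless added generality (and your worry about the degenerate comparison is already absorbed by the hypothesis $h_0\ge k_0\ge 0$ in Proposition~\ref{prop:ODE_bound}). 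The genuinely different route is $(1)\Rightarrow(2)$. The paper embeds $\mf g$ into the strictly upper-triangular algebra $\mf n(n)$ via Ado's theorem for nilpotent Lie algebras, dominates $\rho(h_0)$ by a supersolution $f_0=\sum f_0^{(j-i)}E_{i,j}\otimes\bar{E_{i,j}}$ whose evolution collapses to the convolution recursion $\pd_tf^{(k)}=\frac{1}{2}\sum_{j=1}^{k-1}f^{(j)}f^{(k-j)}$ with polynomial solutions, and pulls the bound back through the faithful representation using the comparison principle. You instead argue intrinsically with the lower central series: the weight inequality $c^\gamma_{\alpha\beta}=0$ unless $w(\gamma)\ge w(\alpha)+w(\beta)$ makes the coordinate system triangular in the total weight $N=w(\alpha)+w(\beta)$, each factor on the right having total weight at most $N-2$, so strong induction on $N$ shows every entry of $h(t)$ itself is a polynomial of degree controlled by the nilpotency class. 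Your version avoids Ado's theorem (whose nilpotent refinement is a nontrivial external input) and needs no comparison argument in this direction; the paper's version buys an explicit closed-form dominating solution via the recursion, which it reuses when describing the collapsing behavior in Example~\ref{ex:Iwasawa}. Both yield the required polynomial bound $h(t)<p(t)h_0$ and $t_{\max}=+\infty$.
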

\begin{proof}
We prove implications $1\Rightarrow 2\Rightarrow 3\Rightarrow 1$.
\medskip

$1\Rightarrow 2$. By Ado's theorem for nilpotent Lie algebras~\cite{hoch-66} there exits a faithful representation of $\mf g$ into some $\mf {gl}(V)$ such that $\mf g$ acts by nilpotent endomorphisms. With the use of basic theory of Lie algebras~\cite[\S 3.3]{humph-73} one  can assume that the image of this representation lies in $\mf n(n)$~--- the Lie algebra of strictly upper-triangular $n\times n$ matrices:
\[
\rho\colon \mf g\to \mf n(n).
\]
We extend $\rho$ to a map $\rho\colon  \Sym^{1,1}(\mf g)\to  \Sym^{1,1}(\mf n(n))$ in the obvious way.

Let $\{E_{i,j}|1\le i<j\le n\}$ be the elementary matrices spanning $\mf n(n)$. We fix a collection of positive real numbers $\{f_0^{(k)}\}_{k=1}^{n-1}$ such that the Hermitian form $f_0\in \Sym^{1,1}(\mf n(n))$
\[
{f}_0:=\sum_{1\le i<j\le n}f_0^{(j-i)}E_{i,j}\otimes \bar{E_{i,j}}
\]
is greater than $\rho(h_0)$. Consider the solution $f(t)\in \Sym^{1,1}(\mf n(n))$ to the ODE 
\[
\pd_t f=f^\#
\]
with the initial condition $f(0)=f_0$. After expanding the definition of $f^\#$ we see that this ODE is equivalent to a system of $n-1$ scalar equations 
\[
\pd_t f^{(k)}=\frac{1}{2}\sum_{j=1}^{k-1}f^{(j)} f^{(k-j)}, \quad k=1,\dots,n-1.
\]
Solving these equations inductively for $k=1,\dots,n-1$ we get $f^{(k)}(t)=p_{k-1}(t)$, where $p_{k-1}(t)$ is a polynomial of degree $(k-1)$. 

Hermitian forms $\rho(h(t))$ and $f(t)$ satisfy the same ODE with the initial conditions $\rho(h_0)<f_0$. Therefore Proposition~\ref{prop:ODE_bound} implies that $\rho(h(t))\le f(t)$. Since $f(t)$ has polynomial growth, we get
\[
\rho(h(t))<p(t)f_0
\]
for some polynomial $p(t)$. Finally, using the fact that $\rho$ is faithful and $h_0\in\Sym^{1,1}(\mf g)$ is positive definite, we find a constant $C$ such that
\[
h(t)<Cp(t)h_0.
\]
As $h(t)$ is bounded on any interval $[0,\tau)$, the solution extends to the whole $[0;+\infty)$.
\medskip

$2\Rightarrow 3$. Is trivially true.

$3\Rightarrow 1$. Assume that 1 does not hold and $\mf g$ is not nilpotent. Then by Engel's theorem for some $x\in\mf g$ the operator $\mathrm{ad}_x$ is not nilpotent. Hence the map $\mathrm{ad}_x\colon\mf g\to\mf g$ has non-zero eigenvalue $\lambda$:
\[
[x,y]=\lambda y, y\neq 0.
\]
Consider $f_0:=a_0x\otimes\bar x+b_0y\otimes \bar y\in\Sym^{1,1}(\mf g)$. Note that $f_0^\#=|\lambda|^2a_0b_0 y\otimes \bar y$, hence for the functions $a(t), b(t)$ with $a(0)=a_0$, $b(0)=b_0$ satisfying
\[
\pd_t a=0,\quad \pd_t b=|\lambda|^2 ab
\]
the form $f(t)=a(t)x\otimes\bar x+b(t)y\otimes \bar y$ solves the ODE~\eqref{eq:ODE_sec_algebraic}. Explicitly these functions are given by $a(t)=a_0$, $b(t)=b_0e^{|\lambda|^2a_0t}$. If positive numbers $a_0,b_0$ are small enough, one has $f_0<h_0$, hence by Proposition~\ref{prop:ODE_bound} $f(t)<h(t)$. Therefore $h(t)$ can not have subexponential growth. Contradiction.
\end{proof}

\begin{theorem}\label{thm:ODE_solvable}
For a complex Lie algebra $\mf g$ and a positive definite Hermitian form $h_0\in \Sym^{1,1}(\mf g)$ let $h(t)$ be the solution to the ODE~\eqref{eq:ODE_sec_algebraic}
on the maximal time interval $[0;t_\mathrm{max})$, $0<t_\mathrm{max}\le+\infty$. Then the following are equivalent:
\begin{enumerate}
\item[1.] $\mf g$ is a solvable Lie algebra;
\item[2.] for any initial condition $h_0$ the solution $h(t)$ has at most \emph{exponential growth}, i.e., $t_{\max}=+\infty$ and there exist constants $C,K$ such that
\[
h(t)<C e^{K t} h_0;
\]
\item[3.] for some initial condition $h_0$ the solution $h(t)$ exists on $[0,+\infty)$, i.e., $t_{\max}=+\infty$.
\end{enumerate}
\end{theorem}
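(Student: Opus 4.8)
The plan is to prove the three equivalences $1\Rightarrow 2\Rightarrow 3\Rightarrow 1$, exploiting the structure of solvable Lie algebras and reusing the comparison machinery from Proposition~\ref{prop:ODE_bound} and the nilpotent case just established. The key structural fact I would use is that a solvable complex Lie algebra $\mf g$ admits a faithful representation $\rho\colon\mf g\to\mf b(n)$ into the Lie algebra of upper-triangular $n\times n$ matrices (Lie's theorem together with Ado's theorem), with the nilpotent subalgebra $\mf n(n)$ containing the image of the derived algebra $[\mf g,\mf g]$.

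\medskip

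For $1\Rightarrow 2$, I would mimic the nilpotent argument but now allow nonzero diagonal entries. After fixing an $h_0$-adapted representation into $\mf b(n)$, I would construct an explicit comparison form $f_0\in\Sym^{1,1}(\mf b(n))$ dominating $\rho(h_0)$ that is supported on a convenient spanning set of matrix units, and analyze the resulting scalar system coming from $\pd_t f=f^\#$. The crucial observation is that $[\mf b(n),\mf b(n)]\subseteq\mf n(n)$, so the diagonal generators $E_{i,i}$ only feed into the strictly-upper-triangular components through their brackets $[E_{i,i},E_{i,j}]$, which are again strictly upper triangular. This forces the diagonal coefficients $f^{(i,i)}(t)$ to be \emph{constant} (their bracket-square vanishes), while each strictly-upper component satisfies a linear-in-itself equation driven by lower-level components together with constant diagonal coefficients. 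Solving level by level, one sees that each coefficient grows at most like $C e^{Kt}$ for constants depending only on the initial diagonal data, giving $f(t)<Ce^{Kt}f_0$; transporting this back through the faithful $\rho$ and the positivity of $h_0$ yields $h(t)<C'e^{K't}h_0$ and in particular $t_{\max}=+\infty$. The main obstacle here is organizing the scalar system so that the linear (rather than genuinely quadratic) nature of the off-diagonal evolution is transparent — one must be careful that the quadratic term $f^\#$ never couples two strictly-upper components in a way that produces super-exponential feedback, and this is exactly what the strict upper-triangularity of all brackets guarantees.

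\medskip

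The implication $2\Rightarrow 3$ is immediate, since exponential growth in particular forces $t_{\max}=+\infty$ for every initial condition, hence for some. For the contrapositive $3\Rightarrow 1$, I would argue by contradiction: if $\mf g$ is not solvable, then by Levi decomposition it has a nontrivial semisimple quotient, equivalently there is a surjection $\rho\colon\mf g\to\mf s$ onto a semisimple (indeed simple) complex Lie algebra. Using naturality of $\#$ (Remark~\ref{rk:natural}), the pushed-forward form $\rho(h(t))$ solves the ODE on $\mf s$. Taking $\mf s$ to contain a copy of $\mf{sl}(2,\C)$ and restricting attention to the corresponding $\mf{su}(2)$-type subsystem, I would invoke the finite-time blow-up computation of Example~\ref{ex:ode_3d}: for \emph{any} positive definite initial condition on $\mf{sl}(2,\C)$ the determinant of the solution blows up in finite time because $\det$ satisfies $\pd_t\det h_\R\ge 3(\det h_\R)^{4/3}$. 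The comparison principle of Proposition~\ref{prop:ODE_bound} then forces $h(t)$ itself to blow up before the corresponding time, so $t_{\max}<+\infty$ for every initial condition, contradicting~3. The one technical point requiring care is passing from a positive definite $h_0$ on $\mf g$ to a positive definite comparison form on the $\mf{sl}(2,\C)$-factor that is dominated by $h_0$ and whose pushforward evolves independently; naturality of $\#$ and the monotonicity of Proposition~\ref{prop:ODE_bound} make this routine once the surjection onto a simple quotient is fixed.
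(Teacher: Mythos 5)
Your proof is essentially correct, and for $1\Rightarrow 2$ and $2\Rightarrow 3$ it matches the paper's argument: Ado plus Lie's theorem to embed $\mf g$ into $\mf b(n)$, a dominating comparison form supported on the matrix units, the observation that the diagonal (level-zero) coefficient is constant because $[\mf b(n),\mf b(n)]\subseteq\mf n(n)$, and an inductive, level-by-level integration in which each level-$k$ coefficient obeys an equation that is linear in itself with forcing by already-solved lower levels — exactly the convolution system $\pd_t f^{(k)}=\tfrac12\sum_{j=0}^k f^{(j)}f^{(k-j)}$ of the paper, whose solutions are polynomials in $e^{f_0^{(0)}t}$.

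For $3\Rightarrow 1$ you take a genuinely different route. The paper stops at the simple quotient $\rho\colon\mf g\to\mf g_0$ and uses the form $\kappa$ with $\kappa^\#=\lambda\kappa$ (Example~\ref{ex:sharp_simple}), so that $\epsilon(t)\kappa$ with $\pd_t\epsilon=\lambda\epsilon^2$ is an explicit finite-time blow-up solution dominated by $\rho(h_0)$; Proposition~\ref{prop:ODE_bound} then finishes. You instead descend further to an $\mf{sl}(2,\C)$ subalgebra of the simple quotient and invoke the three-dimensional computation of Example~\ref{ex:ode_3d}. This works, because a form supported on a subalgebra $\mf k\subseteq\mf g_0$ pushes forward (via naturality of $\#$ applied to the inclusion homomorphism) to a solution on $\mf g_0$, and Proposition~\ref{prop:ODE_bound} only requires the comparison form to be positive \emph{semi}definite; your approach is more hands-on but needs this extra naturality step, while the paper's is shorter because the Einstein-type identity $\kappa^\#=\lambda\kappa$ is already available. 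One caveat: you assert that \emph{any} positive definite initial condition on $\mf{sl}(2,\C)$ blows up in finite time via the determinant inequality, but Example~\ref{ex:ode_3d} only establishes this for forms arising from $\Sym^2(\mf{su}(2))$ (simultaneously diagonalizable in an orthonormal basis of the compact real form); the general positive definite case is not proved in the paper and is essentially the content of Conjecture~\ref{conj:pinching}. This does not break your argument — you are free to choose the comparison form to be a small multiple of the round form on the $\mf{sl}(2,\C)$ subalgebra, dominated by $\rho(h_0)$, for which the blow-up is established — but the claim as stated should be weakened to that specific choice.
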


\begin{proof}
The proof is essentially analogous to Theorem~\ref{thm:ODE_nilpotent}. We prove implications $1\Rightarrow 2\Rightarrow 3\Rightarrow 1$.
\medskip

$1\Rightarrow 2$. By Ado's theorem there exits a faithful representation of $\mf g$ into some $\mf {gl}(V)$, and by Lie's theorem one  can assume that the image of this representation lies in $\mf b(n)$~--- the Borel subalgebra of $\mf {gl}(V)$, consisting of upper-triangular $n\times n$ matrices:
\[
\rho\colon \mf g\to \mf b(n).
\]

Let $\{E_{i,j}|1\le i\le j\le n\}$ be the elementary matrices spanning $\mf b(n)$. We fix a collection of positive real numbers $\{f_0^{(k)}\}_{k=0}^{n-1}$ such that the Hermitian form $f_0\in \Sym^{1,1}(\mf b(n))$
\[
{f}_0:=\sum_{1\le i\le j\le n}f_0^{(j-i)}E_{i,j}\otimes \bar{E_{i,j}}
\]
is greater than $\rho(h_0)$. Consider the solution $f(t)\in \Sym^{1,1}(\mf b(n))$ to the ODE~\eqref{eq:ODE_sec_algebraic}
with the initial condition $f(0)=f_0$. After expanding the definition of $f^\#$ we see that this ODE is equivalent to a system of $n-1$ scalar equations
\[
\pd_t f^{(k)}=\frac{1}{2}\sum_{j=0}^{k}f^{(j)} f^{(k-j)}, \quad k=1,\dots,n-1
\]
with $f^{(0)}(t)\equiv f^{(0)}_0$. Solving these equations inductively for $k=1,\dots,n-1$ we get $f^{(k)}(t)=q_{k}(e^{f^{(0)}_0 t})$, where $q_{k}(t)$ is a polynomial of degree $k$ with $q_{k}(0)=0$. 

Hermitian forms $\rho(h(t))$ and $f(t)$ solve the same ODE with the initial conditions $\rho(h_0)<f_0$. Therefore Proposition~\ref{prop:ODE_bound} implies that $\rho(h(t))\le f(t)$. Since $f(t)$ has exponential growth, we get
\[
\rho(h(t))<C_0e^{Kt}f_0
\]
for some constants $C_0,K$. Finally, using the fact that $\rho$ is faithful and $h_0\in\Sym^{1,1}(\mf g)$ is positive definite, we find a constant $C$ such that
\[
h(t)<C e^{Kt}h_0.
\]
As $h(t)$ is bounded on any interval $[0,\tau)$, the solution extends to the whole $[0;+\infty)$.
\medskip

$2\Rightarrow 3$. Is trivially true.

$3\Rightarrow 1$. Assume that 1 does not hold and $\mf g$ is not solvable. Denote by $Rad(\mf g)$ the maximal solvable ideal. Then the quotient $\mf g/Rad(\mf g)$ is semisimple Lie algebra and has a simple summand $\mf g_0$. So there is a surjective homomorphism onto a simple Lie algebra.
\[
\rho\colon \mf g\to \mf g_0.
\]
As in the set-up for Conjecture~\ref{conj:pinching} let $\kappa\in \Sym^{1,1}(\mf g_0)$ be a positive-definite Hermitian form corresponding to the Killing metric of the compact real form of $\mf g_0$. Then according to Example~\ref{ex:sharp_simple} and Theorem~\ref{thm:scale-static} $\kappa^\#=\lambda \kappa$ for some $\lambda>0$.

Now, let $h(t)\in\Sym^{1,1}(\mf g)$ be a solution to the ODE~\eqref{eq:ODE_sec_algebraic} defined on $[0,+\infty)$. 
Choose $\epsilon_0>0$ such that $\epsilon_0\kappa<\rho(h(0))$.
If $\epsilon(t)$ satisfies the equation
\[
\pd_t\epsilon=\lambda\epsilon^2,\quad \epsilon(0)=\epsilon_0,
\]
then $f(t)=\epsilon(t)\kappa$ solves the ODE~\eqref{eq:ODE_sec_algebraic} for $f(t)\in\Sym^{1,1}(\mf g_0)$ with the initial data $f(0)=\epsilon_0\kappa$. Explicitly we have
\[
\epsilon(t)=\frac{\epsilon_0}{1-\epsilon_0\lambda t}.
\]
On the one hand we have the solution $f(t)$ to~\eqref{eq:ODE_sec_algebraic} blowing up at the finite time $t=(\epsilon_0\lambda)^{-1}$, on the other hand $f(0)<\rho(h(0))$, hence by Proposition~\ref{prop:ODE_bound} for any $t\ge 0$
\[
f(t)<\rho(h(t)).
\]
Contradiction with the finiteness of $h(t)$ for all $t\in[0,+\infty)$.
\end{proof}

\begin{example}[Iwasawa manifold]\label{ex:Iwasawa}
Let $G$ be the 3-dimensional complex Heisenberg group
\[
G:=\left\{
\left[
\begin{matrix}
1 & a & b \\
0 & 1 & c \\
0 & 0 & 1
\end{matrix}
\right]
\vline
\ a,b,c\in\C
\right\}.
\]
and $\Gamma\subset G$ its discrete subgroup, consisting of matrices with $a,b,c\in\Z[i]$. The quotient $M=G/\Gamma$ is a compact complex manifold called the \emph{Iwasawa manifold}.  $M$ is an example of a compact complex manifold which does not admit any K\"ahler metric. In fact, since $M$ is not formal, there is no complex structure on the underlying differential manifold $M$ admitting a K\"ahler metric.

The Lie algebra of $G$ is $\mf g=\mathrm{span}(\pd_a,\pd_b,\pd_c)\simeq\mf n(3)$. Consider $g_0=\mathrm{ev}_*(h_0^{-1})$, where $h_0\in\Sym^{1,1}(\mf g)$. Denote by $h(t)$ the solution to the ODE~\eqref{eq:ODE_sec_algebraic}. Theorem~\ref{thm:ODE_nilpotent} provides explicit expression for $h(t)$ and, in particular, implies that $h(t)$ polynomially blows up as $t\to\infty$. In fact, since $h^\#$ is proportional to $\pd_b\otimes\bar{\pd_b}$ for any $h\in \Sym^{1,1}(\mf g)$, we see that $\pd_b\otimes\bar{\pd_b}$ is the only coordinate of $h(t)$, which blows up. For the solution $g(t)=\mathrm{ev}_*(h(t)^{-1})$ ot the HCF this means that as $t\to\infty$
\[
g(t)(\pd_b,\pd_b)\to 0,\quad 
g(t)|_{\mathrm{span}(\pd_a,\pd_c)}\equiv
g(0)|_{\mathrm{span}(\pd_a,\pd_c)}.
\]
To get geometric picture consider the Gromov-Hausdorff limit of $(G/\Gamma, g(t))$. It is easy to see that projection onto coordinates $a$ and $c$ defines a holomorphic fibration
\[
\pi\colon G/\Gamma\to\C/\Z[i]\times\C/\Z[i].
\]
The fibers of $\pi$ are the orbits of the flow generated by $\C\!\cdot\!\pd_c$.
The limiting behavior of $g(t)$ implies that in the Gromov-Hausdorff limit the fibers with the induced metric uniformly collapse to a point as $t\to+\infty$ and $G/\Gamma$ collapses to the product of elliptic curves:
\[
(G/\Gamma, g(t))\underset{GH}{\to} (\C/\Z[i]\times\C/\Z[i], g(0)|_{\mathrm{span}(\pd_a,\pd_c)}).
\]
\end{example}
Using the computations of Theorem~\ref{thm:ODE_nilpotent} one can show that the HCF exhibits similar behavior on all complex nilmanifolds of the form $G/\Gamma$, where $G$ is a complex nilpotent group and $\Gamma\subset G$ is a cocompact lattice.


\bibliographystyle{siam}
\bibliography{biblio}

\end{document}